\documentclass{amsart}
\usepackage{amssymb}
\usepackage{mathtools}
\usepackage[svgnames]{xcolor}
\usepackage[unicode,
  colorlinks=true,
  linktocpage=true,
  citecolor=OliveDrab,
  linkcolor=DarkMagenta,
  linkcolor=DarkMagenta,
  pdftitle={Embeddable partial groups},
  pdfauthor={Philip Hackney, Justin Lynd, and Edoardo Salati}
  ]{hyperref}
\usepackage[capitalise,noabbrev]{cleveref}
\usepackage{tikz-cd}
\usepackage{not_clever}
\usetikzlibrary{calc,shapes.geometric,decorations.pathmorphing}
\usepackage{microtype}
\frenchspacing
\usepackage[T1]{fontenc}

\newcommand{\op}{\textup{op}}
\newcommand{\set}{\mathsf{Set}}
\newcommand{\sset}{\mathsf{sSet}}
\newcommand{\sym}{\mathsf{Sym}}
\newcommand{\cat}{\mathsf{Cat}}
\newcommand{\gpd}{\mathsf{Gpd}}

\newcommand{\words}{\mathbf{W}}
\newcommand{\bswords}{\mathbf{S}}

\DeclareMathOperator{\id}{id}
\DeclareMathOperator{\sk}{sk}

\newcommand{\leftsquigarrow}{\mathrel{\reflectbox{$\rightsquigarrow$}}}

\newcommand{\rep}[1]{\Upsilon^{#1}}
\newcommand{\spine}[1]{\textup{Sp}^{#1}}
\newcommand{\circspine}[1]{\textup{cSp}^{#1}}
\newcommand{\NA}{\operatorname{NA}}
\newcommand{\NNA}{\operatorname{A}}

\usepackage{mathrsfs}
\newcommand{\edgemap}{\mathscr{E}}
\newcommand{\bous}{\mathscr{B}}

\DeclareMathOperator{\rs}{\mathsf{rs}}

\newtheorem{theorem}{Theorem}
\newtheorem{corollary}[theorem]{Corollary}
\newtheorem{proposition}[theorem]{Proposition}
\newtheorem{lemma}[theorem]{Lemma}

\theoremstyle{definition}
\newtheorem{definition}[theorem]{Definition}
\newtheorem{example}[theorem]{Example}

\theoremstyle{remark}
\newtheorem{remark}[theorem]{Remark}

\begin{document}

\title{Embeddable partial groups}

\author{Philip Hackney}
\address{Department of Mathematics, University of Louisiana at Lafayette}
\email{philip@phck.net} 
\urladdr{http://phck.net}

\author{Justin Lynd}
\address{Department of Mathematics, University of Louisiana at Lafayette}
\email{lynd@louisiana.edu}

\author{Edoardo Salati}
\address{Department of Mathematics, RPTU University Kaiserslautern-Landau}
\email{edoardo.salati@rptu.de}

\thanks{
This work was supported by a grant from the Simons Foundation (\#850849, PH)
and by a grant from the Simons Foundation International (SFI-MPS-TSM-00014188, JL). 
PH was partially supported Louisiana Board of Regents through the Board of Regents Support fund LEQSF(2024-27)-RD-A-31.
ES is grateful for travel support from TU Nachwuchsring at the RPTU Kaiserslautern-Landau.
}

\begin{abstract}
We record a folklore theorem that says a partial group embeds in a group if and only if each word has at most one possible multiplication, regardless of choice of parenthesization. 
We further investigate the partial groups which are exemplars of non-embeddability. 
Finally we show that a partial groupoid embeds in a groupoid if and only if its reduction embeds in a group.  
\end{abstract}

\maketitle

A partial group (in Chermak's sense \cite{Chermak:FSL}) is \emph{embeddable} if it is an impartial subgroup of some group.
There is an evident obstruction to embeddability: if there is a word $w$ in the elements of the partial group and two total parenthesizations of that word such that the successive binary multiplications specified by the two parenthesizations exist and are different, then it is impossible to embed in any group.
We show here that the converse holds, a result essentially going back to Baer (\cref{rmk baer}).
This theorem has been spoken about since the early days of partial groups. 
For example, it was a topic of discussion in July 2011 at the masterclass ``Fusion systems and $p$-local group theory'' in Copenhagen.

The corresponding result without assuming invertibility of elements is false, but the theorem is also true in the many-object case (partial groupoids), and it is no more effort to work at this level of generality.
Our proof uses well-known material about the reflection $\tau\colon \sset \to \cat$ of simplicial sets into categories going back to Gabriel \& Zisman \cite{GabrielZisman:CFHT}.

We also construct a collection of universal counterexamples to embeddability, one for each suitable pair of parenthesizations of a generic length $n$ word.
Using these, the category of embeddable partial groupoids may be concretely described as an orthogonality class in the category of partial groupoids.
We compute the degree, in the sense of \cite{HackneyLynd:HSSPG}, of these universal counterexamples.

This paper uses the formalism of symmetric (simplicial) sets, an enhancement of simplicial sets which is suitable for modeling invertibility.
Symmetric sets have proved to be a convenient home for working with partial groups \cite{HackneyLynd:PGSSS}.
Let $\Upsilon$ be the category whose objects are the sets $[n] = \{0,1,\dots,n\}$ for $n\geq 0$, with arbitrary functions as morphisms.
A symmetric set is just a functor $\Upsilon^\op \to \set$. 
By composing with the inclusion $\Delta \to \Upsilon$, we obtain a functor $\sym \to \sset$ which takes a symmetric set to its underlying simplicial set.

\section{Embeddability}

The nerve functor $N\colon \cat \to \sset$ admits a left adjoint $\tau$ which sends a simplicial set $X$ to the category $\tau X$ which is formed by first taking the free category on the underlying reflexive directed graph and then imposing the relation $(d_0\sigma) \circ (d_2\sigma) = d_1\sigma$ for every $\sigma \in X_2$. 
In particular, $\tau X$ only depends on the 2-skeleton of $X$.

The fundamental groupoid functor is the composite 
\[ \begin{tikzcd}
\sset \rar{\tau} & \cat \rar[bend left] & \gpd, \lar[hook', "\bot"']
\end{tikzcd} \]
where the left adjoint $\cat \to \gpd$ inverts all morphisms.
The fundamental group at a vertex $x\in X_0$ is the set of endomorphisms of $x$ in the fundamental groupoid.
The composite $\sym \to \sset \to \cat$ already lands in the category of groupoids, so the fundamental groupoid of a symmetric set $X$ is just $\tau X$.

If $X$ is a simplicial set, $\words(X)$ is the graded set of compatible words in $X_1$.
That is, $\words(X)_0 = X_0$ and \[ \words(X)_n = X_1 \times_{X_0} X_1 \times_{X_0} \dots \times_{X_0} X_1 = \{ x_0 \xrightarrow{f_1} x_1 \xrightarrow{f_2} \cdots \xrightarrow{f_n} x_n \}.\]
We say $X$ is \emph{edgy} if the Segal map $\edgemap_n \colon X_n \to \words(X)_n$ picking out principal edges is injective for all $n\geq 1$ \cite[Definition 1.7]{HackneyLynd:PGSSS}.
An edgy simplicial set is a kind of partial category, with partially-defined $n$-ary composition given by the span $\words(X)_n \hookleftarrow X_n \to X_1$, whose rightward leg is the composition of inner face maps.
A symmetric set is called \emph{spiny} if its underlying simplicial set is edgy \cite[\S3]{HackneyLynd:PGSSS}; we also call spiny symmetric sets \emph{partial groupoids}.
Each groupoid is a partial groupoid via the fully faithful nerve functor $N\colon \gpd \to \sym$ \cite[4.1]{Grothendieck:TCTGA3}, whose left adjoint is $\tau$.
In this paper a \emph{partial group} is a reduced spiny symmetric set \cite[Theorem~A]{HackneyLynd:PGSSS}.

From now on we are only interested in edgy simplicial sets.
Write $\words^+(X) \subseteq \words(X)$ for the set of positive-length words. 
We define a transitive relation $\rightsquigarrow$ on $\words^+(X)$ using the partially-defined inner face maps \cite[\S4.1]{HackneyLynd:HSSPG}.
Namely, if $w = (f_1, \dots, f_n)$ and there is a 2-simplex $[f_i | f_{i+1}] \in X_2$, we write
\[
	w \rightsquigarrow (f_1, \dots, f_{i+1} \circ f_i, \dots, f_n)
\]
where $f_{i+1} \circ f_i = d_1[f_i|f_{i+1}]$.
The relation $\rightsquigarrow$ is the transitive closure of this operation.
In particular, if $w \rightsquigarrow w'$ then on lengths we have $|w| > |w'|$.
The set of morphisms of $\tau X$ is the quotient of $\words^+(X)$ by the equivalence relation generated by $\rightsquigarrow$.
Write $[w] \subseteq \words^+(X)$ for the equivalence class of $w\in \words^+(X)$.

Suppose $f,g \in X_1$ become equal in $\tau X$.
There is a zig-zag
\[
\begin{tikzcd}[column sep=tiny, row sep=tiny]
& \mathclap{w_1} \drar[rightsquigarrow] \dlar[rightsquigarrow] && \mathclap{w_3} \drar[rightsquigarrow] \dlar[rightsquigarrow] &&[+0.7cm] &&   \mathclap{w_{2n-1}} \drar[rightsquigarrow] \dlar[rightsquigarrow] \\
\mathclap{f} && \mathclap{w_2} && \mathclap{w_4} 
\ar[rr,"\hspace{-0.35em}\cdots", bend left=40, phantom]
&&
\mathclap{w_{2n-2}} && \mathclap{g}
\end{tikzcd}
\]
between $f$ and $g$.
In general, we don't have any control over the length of this zig-zag between $f$ and $g$.
\begin{example}
Consider the 2-dimensional simplicial set with eight nondegenerate 2-simplices and thirteen nondegenerate 1-simplices, as displayed below.
\[ \begin{tikzcd}
&[-1.5em] 1 \rar["b"] & 2 \drar["c"] &[-1.5em] 
&
&[-1.5em] 1 \rar["b"] \ar[drr,"e"'] & 2 \drar["c"] &[-1.5em]
\\
0 \urar["a"] \ar[rrr,"f"'] \ar[urr,"d"'] & & & 3 
& 
0 \urar["a"] \ar[rrr,"h"'] & & & 3
\\[-1em]
& 1' \rar["q"] & 2' \drar["r"] & 
&
& 1' \rar["q"] \ar[drr,"t"'] & 2' \drar["r"] &
\\
0 \urar["p"] \ar[rrr,"h"'] \ar[urr,"s"'] & & & 3 
& 
0 \urar["p"] \ar[rrr,"g"'] & & & 3
\end{tikzcd} \]
This simplicial set is edgy.
We have a zigzag $f \leftsquigarrow (a,b,c) \rightsquigarrow h \leftsquigarrow (p,q,r) \rightsquigarrow g$ but no shorter one will do to relate $f$ and $g$.
We can vary this construction to require a zigzag of arbitrarily high length.
\end{example}

\begin{theorem}\label{thm injective}
Let $X$ be a partial groupoid and $f,g\in X_1$.
If $[f] = [g]$ in $\tau X$, then there is a word $w \in \words^+(X)$ with $f \leftsquigarrow w \rightsquigarrow g$.
\end{theorem}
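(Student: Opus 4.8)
The plan is to extract an explicit common ancestor directly from a zig-zag, rather than trying to shorten the zig-zag by a diamond or confluence argument; the latter is hopeless, since the Example shows the downward relation $\rightsquigarrow$ is not confluent and zig-zags cannot in general be shortened. Since the morphisms of $\tau X$ are the $\rightsquigarrow$-classes and $\tau X$ has object set $X_0$ (objects are not quotiented), the hypothesis $[f]=[g]$ gives that $f$ and $g$ are \emph{parallel} edges $x\to y$ joined by a finite zig-zag. Collapsing consecutive arrows of the same direction by transitivity, I may assume the zig-zag has the alternating ``peaks and valleys'' form
\[ f \leftsquigarrow w_1 \rightsquigarrow v_1 \leftsquigarrow w_2 \rightsquigarrow v_2 \leftsquigarrow \cdots \rightsquigarrow v_{k-1} \leftsquigarrow w_k \rightsquigarrow g, \]
with $w_1 \rightsquigarrow f$, $w_k \rightsquigarrow g$, and $w_i \rightsquigarrow v_i \leftsquigarrow w_{i+1}$ for $1\le i\le k-1$. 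As every word here lies in the single class $[f]$, all peaks $w_i$ and valleys $v_i$ are again parallel from $x$ to $y$.

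Writing $\bar u=(\bar u_m,\dots,\bar u_1)$ for the reversal of a word $u=(u_1,\dots,u_m)$, where $\bar u_j\in X_1$ is the image of $u_j$ under the nonidentity automorphism of $[1]$ in $\Upsilon$, I propose the single word
\[ w := w_1 \cdot \bar v_1 \cdot w_2 \cdot \bar v_2 \cdots \bar v_{k-1} \cdot w_k. \]
This is composable, since the $w_i\colon x\to y$ and $\bar v_i\colon y\to x$ alternate, and $w\in\words^+(X)$. To see $w\rightsquigarrow f$, I reduce each peak after the first down to its left valley, $w_{i+1}\rightsquigarrow v_i$, performed \emph{inside} the concatenation (legitimate because $\rightsquigarrow$ acts locally on an adjacent pair and ignores the surrounding letters). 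This produces $w \rightsquigarrow w_1\cdot(\bar v_1 v_1)\cdot(\bar v_2 v_2)\cdots(\bar v_{k-1} v_{k-1})$; each loop $\bar v_i v_i$ then cancels to an identity by the cancellation lemma below, and the identities are absorbed by their neighbours via degeneracies, leaving $w_1\rightsquigarrow f$. Symmetrically, reducing each peak before the last down to its right valley, $w_i\rightsquigarrow v_i$, and cancelling the loops $v_i\bar v_i$ gives $w\rightsquigarrow w_k\rightsquigarrow g$.

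It remains to establish the \textbf{cancellation lemma}: $u\bar u \rightsquigarrow \id$ for every word $u$. By induction on the length of $u$ it suffices to cancel the innermost pair $(u_m,\bar u_m)$, that is, to exhibit a $2$-simplex $[u_m\mid\bar u_m]$ whose inner face is degenerate. Such a simplex is precisely the image of $u_m\in X_1$ under the map $X_1\to X_2$ induced by the non-monotone $\phi\colon[2]\to[1]$ with $0,2\mapsto 0$ and $1\mapsto 1$: its $d_2$-face is $u_m$, its $d_0$-face is the swap $\bar u_m$, and its $d_1$-face is the identity at the source of $u_m$. Thus $(u_m,\bar u_m)\rightsquigarrow\id$; absorbing this identity by a degeneracy reduces $u\bar u$ to $u'\bar{u'}$ with $u'=(u_1,\dots,u_{m-1})$, and the induction closes.

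The only genuinely creative step is the choice of $w$: once one guesses the alternating concatenation of peaks with reversed valleys, the rest is just locality of $\rightsquigarrow$ together with transitivity. The crux, and the sole place invertibility enters, is the cancellation lemma, which depends on the non-monotone map $\phi\colon[2]\to[1]$ available in $\Upsilon$ but not in $\Delta$. This is exactly why ``partial groupoid'' (spiny symmetric set) is essential and why the analogue fails for a merely edgy simplicial set: without reversals and their cancelling $2$-simplices there is no mechanism to splice the separate peaks into one, consistent with the Example producing zig-zags that admit no shortening.
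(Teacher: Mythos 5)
Your proof is correct and follows essentially the same strategy as the paper's: splice the zig-zag into a single word using reversals, then reduce it in two different ways to reach either end. The differences are cosmetic — the paper's spliced word consists of the peaks alone with every other one reversed (reducing adjacent pairs of peaks down to the intervening valley and cancelling), whereas yours interleaves the reversed valleys between the peaks; and you make explicit the cancellation lemma $u\bar u \rightsquigarrow \id$ that the paper uses implicitly.
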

\begin{proof}
Suppose $n\geq 1$ and we have a zigzag of elements of $\words^+(X)$ as follows.
\begin{equation}\label{eq:zigzag}
\begin{tikzcd}[column sep=tiny, row sep=tiny]
& \mathclap{w_1} \drar[rightsquigarrow] \dlar[rightsquigarrow] && \mathclap{w_3} \drar[rightsquigarrow] \dlar[rightsquigarrow] &&[+0.7cm] &&   \mathclap{w_{2n-1}} \drar[rightsquigarrow] \dlar[rightsquigarrow] \\
\mathclap{w_0} && \mathclap{w_2} && \mathclap{w_4} 
\ar[rr,"\hspace{-0.35em}\cdots", bend left=40, phantom]
&&
\mathclap{w_{2n-2}} && \mathclap{w_{2n}}
\end{tikzcd}
\end{equation}
Set
\[
	w = \begin{cases}
		w_1 w_3^{-1} w_5 w_7^{-1} \cdots w_{2n-3}^{-1} w_{2n-1} & \text{if $n$ is odd} \\
		w_1 w_3^{-1} w_5 w_7^{-1} \cdots w_{2n-1}^{-1} w_{2n} & \text{if $n$ is even}
	\end{cases}
\]
and consider the `odd reduction' that replaces all $w_{2k-1} w_{2k+1}^{-1}$ with $w_{2k} w_{2k}^{-1}$ for $1 \leq k < n$ odd, and the `even reduction' that replaces all $w_{2k-1}^{-1}w_{2k+1}$ with $w_{2k}^{-1} w_{2k}$ for $2 \leq k < n$ even.
Suppose $n$ is odd.
Then the odd reduction of $w$ takes the form
\[
	w_1 w_3^{-1} w_5 w_7^{-1} \cdots w_{2n-3}^{-1} w_{2n-1} \rightsquigarrow w_2 w_2^{-1} w_6 w_6^{-1} \cdots w_{2n-4}^{-1} w_{2n-1}
\]
which further reduces to $w_{2n-1}$.
The even reduction of $w$ takes the form 
\[
	w_1 w_3^{-1} w_5 w_7^{-1} \cdots w_{2n-3}^{-1} w_{2n-1} \rightsquigarrow w_1 w_4^{-1} w_4 w_8^{-1} \cdots w_{2n-2}^{-1} w_{2n-2}
\]
which further reduces to $w_1$.
Thus when $n$ is odd we have
\[
	w_0 \leftsquigarrow w_1 \leftsquigarrow w \rightsquigarrow w_{2n-1} \rightsquigarrow w_{2n}.
\]

Next suppose $n$ is even.
The odd reduction of $w$ takes the form 
\[
	w_1 w_3^{-1} w_5 w_7^{-1} \cdots w_{2n-3} w_{2n-1}^{-1} w_{2n} \rightsquigarrow w_2 w_2^{-1} w_6 w_6^{-1} \cdots w_{2n-2} w_{2n-2}^{-1} w_{2n}
\]
which further reduces to $w_{2n}$.
The even reduction of $w$ takes the form 
\[
	w_1 w_3^{-1} w_5 w_7^{-1} \cdots w_{2n-3} w_{2n-1}^{-1} w_{2n} \rightsquigarrow w_1 w_4^{-1} w_4 w_8^{-1} \cdots w_{2n-4}^{-1} w_{2n-1}^{-1} w_{2n}
\]
which further reduces to $w_1 w_{2n-1}^{-1} w_{2n} \rightsquigarrow w_1 w_{2n}^{-1} w_{2n} \rightsquigarrow w_1$.
Thus when $n$ is even we have 
\[
	w_0 \leftsquigarrow w_1 \leftsquigarrow w \rightsquigarrow w_{2n}.
\]

If $[f] = [g]$ then we have a zig-zag as in \eqref{eq:zigzag} with $w_0 = f$ and $w_{2n} = g$ (noting that when $f = g \colon a \to b$ we may use the word $w_1 = (\id_a, f)$).
We then have $f \leftsquigarrow w \rightsquigarrow g$.
\end{proof}

Recall the partially defined inner face maps $d_i \colon \words(X)_n \nrightarrow \words(X)_{n-1}$ for $0 < i < n$.
A word $w \in \words(X)_n$ is \emph{mean} if there are sequences $i_1, \dots, i_{n-1}$ and $j_1, \dots, j_{n-1}$ with $0<i_k,j_k < k+1$ such that $d_{i_1} \dots d_{i_{n-1}} (w)$ and $d_{j_1} \dots d_{j_{n-1}} (w)$ are defined and are different, and \emph{kind} otherwise. 
(For example, $d_1d_2d_3(w) \neq d_1d_2d_2(w)$.)
We call each one-simplex arising from iterated inner faces of a mean word \emph{sad}.
All other one-simplices are called \emph{happy}. 

\begin{corollary}\label{embeddability theorem}
Let $X$ be a partial groupoid.
The following are equivalent:
\begin{enumerate}
\item $X$ embeds in a groupoid.\label{item gpd}
\item $X$ embeds in its fundamental groupoid via the unit map $X \to N\tau X$. \label{item fundgpd}
\item All one-simplices are happy.\label{item happy}
\item All words are kind.\label{item kind}
\end{enumerate}
\end{corollary}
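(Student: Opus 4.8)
The plan is to prove the chain of equivalences $(\ref{item gpd})\Leftrightarrow(\ref{item fundgpd})\Leftrightarrow(\ref{item kind})\Leftrightarrow(\ref{item happy})$, with \cref{thm injective} reserved for the single implication $(\ref{item kind})\Rightarrow(\ref{item fundgpd})$. The equivalence $(\ref{item gpd})\Leftrightarrow(\ref{item fundgpd})$ is formal, coming from the adjunction $\tau \dashv N$. Since $\tau X$ is a groupoid, $N\tau X$ is the nerve of a groupoid and $(\ref{item fundgpd})\Rightarrow(\ref{item gpd})$ holds trivially. Conversely, a monomorphism $X \hookrightarrow NG$ into the nerve of a groupoid transposes to a functor $\tau X \to G$, and the unit factors the embedding as $X \xrightarrow{u} N\tau X \to NG$; as a composite that is monic has a monic first factor, $u$ is monic, giving $(\ref{item gpd})\Rightarrow(\ref{item fundgpd})$.

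The equivalence $(\ref{item kind})\Leftrightarrow(\ref{item happy})$ comes straight from unwinding definitions: a sad one-simplex is one arising from iterated inner faces of a mean word, so if every word is kind then every one-simplex is happy, while conversely either witnessing reduction of a mean word exhibits a sad one-simplex, so all one-simplices being happy forces all words to be kind. For $(\ref{item fundgpd})\Rightarrow(\ref{item kind})$ I would argue by contraposition. A mean word $w$ carries two iterated inner-face reductions to distinct one-simplices $f \neq g$; reading each inner face $d_i$ as a single step of $\rightsquigarrow$, both reductions yield $[f] = [w] = [g]$ in $\tau X$, so $u$ is not injective on one-simplices and in particular is not a monomorphism.

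The substance is in $(\ref{item kind})\Rightarrow(\ref{item fundgpd})$. First I would reduce the claim that $u$ is a monomorphism to injectivity on $0$- and $1$-simplices: the objects of $\tau X$ are literally $X_0$, so $u_0$ is a bijection, and for each $n$ the naturality square relating the Segal map $\edgemap_n$ of $X$ (injective, since $X$ is edgy) to that of $N\tau X$ (bijective, by the Segal condition for nerves) expresses $u_n$ as an injection provided the induced map $\words(X)_n \to \words(N\tau X)_n$ is injective. The latter holds because $u_0$ is bijective and $u_1$ is injective, an iterated fibre product of injections again being an injection. It then suffices to show that $[f] = [g]$ forces $f = g$ for $f,g \in X_1$.

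This last point is exactly where \cref{thm injective} enters: it furnishes a single word $w$ with $f \leftsquigarrow w \rightsquigarrow g$, that is, two inner-face reductions of $w$ landing on $f$ and on $g$. If $|w| = 1$ then $w = f = g$, since $\rightsquigarrow$ strictly drops length; otherwise, were $f \neq g$, the word $w$ would be mean, contradicting kindness, so again $f = g$. I expect this implication to be the main obstacle, the delicate points being the passage from ``is a monomorphism'' to injectivity on low-dimensional simplices via the Segal and edgy structure, and the careful identification of the relation $\rightsquigarrow$ with the iterated inner faces $d_{i_1}\cdots d_{i_{n-1}}$ used to define meanness.
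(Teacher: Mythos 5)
Your proposal is correct and follows essentially the same route as the paper: the factorization of an embedding $X \hookrightarrow NG$ through the unit to get $(\ref{item gpd})\Leftrightarrow(\ref{item fundgpd})$, the reduction via spininess/Segal maps of injectivity of $X \to N\tau X$ to injectivity on $1$-simplices, and the use of \cref{thm injective} to convert $[f]=[g]$ into a single word $w$ with $f \leftsquigarrow w \rightsquigarrow g$, which is mean precisely when $f \neq g$. The only difference is cosmetic (you route the chain through $(\ref{item kind})$ where the paper goes through $(\ref{item happy})$, treating the other equivalence as definitional), and your added detail on why monomorphism is detected on $1$-simplices fills in a step the paper leaves implicit.
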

\begin{proof}
If $X$ embeds in a groupoid, i.e. if $X \subseteq NG$ for some groupoid $G$, then the inclusion is equal to $X \to N\tau(X) \to N\tau (NG) = NG$, so $X$ embeds in its fundamental groupoid.
Spininess implies that embeddability of $X$ in $N\tau X$ can be detected at the level of 1-simplices.
By the previous theorem, $f\in X_1$ is happy if and only if $[f]\cap X_1 = \{f\}$.
Thus $X$ embeds in its fundamental groupoid if and only if all 1-simplices are happy.
\end{proof}

\begin{remark}\label{rmk baer}
\Cref{thm injective} and \cref{embeddability theorem} are many-object versions of Corollary 1 and Corollary 2 of \cite[\S4]{Baer:FSGG}.
Indeed, a 2-skeletal partial group \cite{HackneyMolinier:DPG} is essentially the same thing as what Baer termed an `add which is self-reflexive in the strict sense':  
in the forward direction one sends $X$ to $X_1$ equipped with its partial binary multiplication, while in the reverse direction one builds a partial group whose set of 2-simplices is the set of multipliable pairs (for details see \cite{HackneyLyndSalati:PGPG}).
Baer's theorems were later rediscovered by Tamari \cite[\S III]{Tamari:PAMPMG}, who called these partial magmas \emph{monoïdes symétriques}. The one-object version of \cref{thm injective} was later known as Tamari's One Mountain Theorem, e.g. \cite[Theorem 2.a]{BLT78}. 
\end{remark}

\begin{definition}
A partial groupoid $X$ is \emph{embeddable} if $X \to N\tau X$ is a monomorphism.
\end{definition}

For a partial group $X$ this definition agrees with that from the first sentence of the article, as an impartial subgroup of $X$ is precisely a nonempty symmetric subset.

\begin{theorem}\label{theorem reflective}
The category of embeddable partial groupoids is a reflective subcategory of the category of partial groupoids.
(Similarly for partial groups.)
\end{theorem}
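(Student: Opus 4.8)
The plan is to exhibit an explicit reflector built from the unit of the $(\tau, N)$-adjunction. Given a partial groupoid $X$, I would factor its unit $X \to N\tau X$ as an epimorphism followed by a monomorphism in the presheaf category $\sym$,
\[ X \xrightarrow{\ \eta_X\ } LX \lhook\joinrel\longrightarrow N\tau X, \]
and take $LX$ to be the candidate reflection. Such factorizations exist and are computed levelwise because $\sym$ is a presheaf category, so $\eta_X$ is a levelwise surjection and $LX$ is a levelwise sub-symmetric-set of $N\tau X$. First I would check that $LX$ is again a partial groupoid: the nerve $N\tau X = N(\tau X)$ of a groupoid has bijective Segal maps, hence is spiny, and injectivity of the Segal maps is inherited by any subobject (if $Y_n \hookrightarrow Z_n$ and $\edgemap_n^Z$ is injective, then $Y_n \to \words(Y)_n \to \words(Z)_n$ is injective, so $\edgemap_n^Y$ is injective). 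Thus $LX$ is spiny, i.e.\ a partial groupoid. Moreover the mono $LX \hookrightarrow N(\tau X)$ exhibits $LX$ as a subobject of a groupoid nerve, so by the equivalence \ref{item gpd}~$\Leftrightarrow$~\ref{item fundgpd} of \cref{embeddability theorem}, $LX$ is embeddable.

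Next I would verify that $\eta_X\colon X \to LX$ is a universal arrow into the (full) subcategory of embeddable partial groupoids; by the standard criterion this both produces the left adjoint $L$ and makes it functorial automatically, so no separate construction of $L$ on morphisms is needed. Let $Z$ be embeddable and $\phi\colon X \to Z$ arbitrary. Uniqueness of a factorization through $\eta_X$ is immediate since $\eta_X$ is epi. For existence I would invoke naturality of the unit: the square relating $X \to N\tau X$, $\phi$, and $N\tau\phi\colon N\tau X \to N\tau Z$ commutes, so the composite $LX \hookrightarrow N\tau X \xrightarrow{N\tau\phi} N\tau Z$ agrees, after precomposition with the epi $\eta_X$, with $X \xrightarrow{\phi} Z \hookrightarrow N\tau Z$. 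As $\eta_X$ is epi, the image of $LX \to N\tau Z$ therefore lands inside the subobject $Z$. Since $Z$ is embeddable, $Z \hookrightarrow N\tau Z$ is a monomorphism, so $LX \to N\tau Z$ factors uniquely through $Z$, yielding $\bar\phi\colon LX \to Z$; a final chase against the mono $Z \hookrightarrow N\tau Z$ confirms $\bar\phi\circ\eta_X = \phi$.

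The step I expect to require the most care is the pair of inheritance claims of the first paragraph, namely that the levelwise image $LX$ stays inside partial groupoids (spininess of subobjects) and that embeddability transfers to it via \cref{embeddability theorem}, rather than the adjunction bookkeeping of the second paragraph, which is a routine diagram chase through the naturality square and the defining monomorphisms. Finally, for the parenthetical assertion about partial groups I would observe that $L$ preserves reducedness: if $X$ has a single vertex then $\tau X$ is a group, $N\tau X$ is reduced, and the subobject $LX$ is reduced as well. Hence $L$ restricts to a reflector of reduced partial groupoids onto embeddable reduced ones, that is, of partial groups onto embeddable partial groups.
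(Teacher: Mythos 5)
Your proposal is correct and follows exactly the paper's argument: the paper's proof is the single sentence ``use the epi--mono factorization of the unit $\eta\colon \id \Rightarrow N\tau$, with the reflection given by $X \twoheadrightarrow \overline{X} \rightarrowtail N\tau X$,'' and your write-up simply supplies the routine verifications (spininess and embeddability of the image, the universal property via naturality of $\eta$, preservation of reducedness) that the paper leaves implicit.
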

\begin{proof}
Use the epi-mono factorization of the unit natural transformation $\eta \colon \id \Rightarrow N\tau$, 
i.e. the reflection is given as the first map $X \twoheadrightarrow \overline{X} \rightarrowtail N\tau X$.
\end{proof}

\begin{example}
Consider the 2-dimensional simplicial set with eight nondegenerate 2-simplices and thirteen nondegenerate edges, as depicted below.
\[ \begin{tikzcd}
&[-1.5em] 1 \rar["b"] & 2 \drar["c"] &[-1.5em] 
&
&[-1.5em] 1 \rar["b"] \drar["p" description] \dar["m"'] & 2 \dar["c"] &[-1.5em]
\\
0 \urar["a"] \ar[rrr,"f"'] \ar[urr,"d"'] & & & 3 
& 
& 2' \rar["c'"']& 3 & 
\\[-1em]
& 1' \rar["b'"] \ar[drr,"t"'] & 2' \drar["c'"] & 
&
& 1 \rar["m"]  & 2' &
\\
0 \urar["a'"] \ar[rrr,"g"'] & & & 3 
& 
 & 0 \uar["a"] \rar["a'"'] \ar[ur,"q" description] & 1' \uar["b'"'] & 
\end{tikzcd} \]
This simplicial set is edgy.
Notice 
\[ f \leftsquigarrow (a,b,c) \rightsquigarrow (a,p) \leftsquigarrow (a,m,c') \rightsquigarrow (q,c') \leftsquigarrow (a',b',c') \rightsquigarrow g,\]
so $f$ and $g$ represent the same morphism in $\tau X$, but all words are kind.
Indeed, $f$ and $g$ are the only parallel edges, hence the only potentially sad edges, but none of the four relations $f \leftsquigarrow (a,m,c') \rightsquigarrow g$,  $(a,b,c) \rightsquigarrow g$, $(a',b',c') \rightsquigarrow f$ hold.
Thus the simplicial analogue of \cref{thm injective} is false.
\end{example}

\begin{example}\label{ex pregroup}
The nerve of a pregroup (in the sense of Stallings) is embeddable \cite{LemoineMolinier:PGPRFS,Stallings:GT3DM}.
Pregroups, as well as known generalizations (e.g.\ \cite{Hoare:GSP,KushnerLipschutz:GSP}), satisfy an associativity axiom stating that if $ab$ and $bc$ are defined, then $(ab)c$ or $a(bc)$ defined implies $(ab)c=a(bc)$.
This axiom need not hold in an embeddable partial group -- for instance it does not hold for the reduction and symmetrization of the horn $\Lambda^3_1 \subset \Delta^3$.
\end{example}

\begin{example}
The free partial group on one generator embeds in $B\mathbb{Z}/3\mathbb{Z}$, but its fundamental group is $\mathbb{Z}$. 
Thus $\pi_1(X) \to G$ need not be injective when $X$ embeds in $G$.
\end{example}

\begin{remark}\label{remark fusion systems}
Fundamental groups of linking systems of saturated fusion systems are of interest in $p$-local group theory \cite[\S III.7 (8)]{AschbacherKessarOliver2011}. 
Linking systems are special kinds of transporter systems \cite[\S 3]{OliverVentura2007}, which are equivalent to special partial groups called localities \cite{Chermak:FSL}, \cite[\S 2]{GlaubermanLynd2021}. 
The locality $L$ associated with a transporter system $\mathcal{T}$ is a quotient $N\mathcal{T} \twoheadrightarrow L$ of the nerve, and this quotient map is a weak equivalence \cite[Appendix]{gonzalez}. 
\Cref{thm injective} may thus be interpreted in light of the computations of fundamental groups of linking systems that have already been carried out. 
As one example, $\pi_1(L^c_{\mathrm{Sol}}(q)) = 1$ for the centric linking locality of a Benson--Solomon fusion system \cite{ChermakOliverShpectorov:LSS2LFGSC}. 
By \cref{thm injective} any two elements of $L^c_{\mathrm{Sol}}(q)$ have some word associating to each. 
\end{remark}

\section{Orthogonality set}\label{sec ortho set}

In \cite{HackneyLynd:HSSPG} a nonembeddable partial groupoid is constructed, based on the two possible triangulations of the square.
This partial groupoid is universal with respect to the property $(fg)h \neq f(gh)$ among the binary compositions in the partial groupoid.
The analysis of the previous section relied on more complicated parenthesizations of words, and in general we will need more complicated variations of this example to detect nonembeddability.
Recall that (full) parenthesizations of $n$ symbols may equally well be described as a triangulation of an $(n{+}1)$-gon.
These are the elements of the Tamari lattice, or the vertices in a Stasheff associahedron.

\begin{figure}
\includegraphics[width=\textwidth]{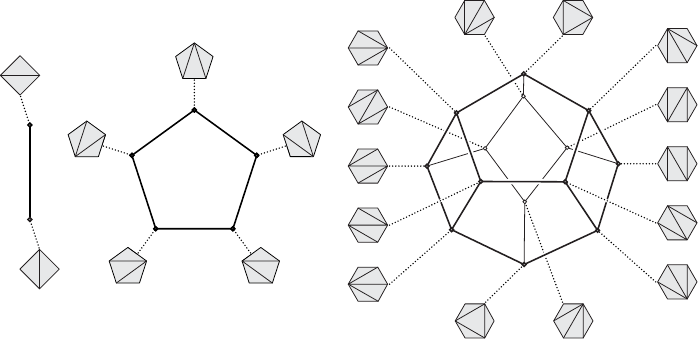}
\caption{Flip graphs showing different triangulations. Image by TheMathCat at English Wikipedia, licensed under CC BY 4.0. \url{https://en.wikipedia.org/wiki/File:Flip_graphs.svg}}\label{flip graphs}
\end{figure}

We now consider triangulations $T$ of a regular $(n{+}1)$-gon having vertex set $\{0,1,\dots,n\}$. 
Each such $T$ determines a symmetric subset
\[
	H(T) \subseteq \sk_2 \rep{n} \subseteq \rep{n}
\]
consisting of precisely those triangles in $T$ (here $\rep{n}$ denotes the representable functor $\hom_\Upsilon(-,[n]) \colon \Upsilon^\op \to \set$). 
Notice that $\spine{n} \subseteq \rep{n}$, consisting of those functions $[m] \to [n]$ whose image lands in $\{i,i+1\}$ for some integer $i$, is a symmetric subset of $H(T)$ for every such triangulation.
We regard $T$ as a set of triples $(i,j,k)$ with $0 \leq i < j < k \leq n$, where $i,j,k$ are the vertices appearing in a triangle.

\begin{proposition}\label{prop generalized NA}
Let $T$ and $T'$ be two triangulations of a regular $(n{+}1)$-gon.
The symmetric set $H(T) \amalg_{\spine{n}} H(T')$ is spiny if and only if for each $i=1, \dots, n-1$, 
\[ (i-1,i,i+1) \notin T\cap T'.\]
\end{proposition}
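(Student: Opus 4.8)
The plan is to work on the underlying simplicial sets, using that colimits in $\sym$ are computed levelwise. First I would record the shape of $X \coloneqq H(T) \amalg_{\spine{n}} H(T')$: in each degree $X_m = H(T)_m \amalg_{(\spine{n})_m} H(T')_m$, and since $\spine{n} \hookrightarrow H(T)$ and $\spine{n} \hookrightarrow H(T')$ are monomorphisms, this is the gluing of the two triangulated disks along the common path $0 \to 1 \to \dots \to n$. Two features are worth isolating. First, $X$ is two-dimensional and has no nondegenerate loops, since every $1$-simplex comes from $\rep{n}$ (which is edgy, e.g.\ as the nerve of the codiscrete groupoid on $\{0,\dots,n\}$), whose nondegenerate edges join distinct vertices. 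Second, a nonspine $1$-simplex (a diagonal, or the long edge $\{0,n\}$) occurs as two distinct copies in $X$, one from each $H(T)$, whereas a spine edge $\{i,i+1\}$ occurs once; consequently every nondegenerate triangle of $X$ belongs to exactly one of $T$ or $T'$, carries its long edge as a nonshared diagonal, and no two triangles are identified.

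For the contrapositive of the ``only if'' direction, suppose $(i-1,i,i+1) \in T \cap T'$ for some $i$. The ear on $\{i-1,i,i+1\}$ then gives a nondegenerate $2$-simplex $\sigma$ from $H(T)$ and a nondegenerate $2$-simplex $\sigma'$ from $H(T')$. Their spine faces $d_2\sigma = (i-1 \to i)$ and $d_0\sigma = (i \to i+1)$ are spine edges, hence shared, so $\edgemap_2(\sigma) = \edgemap_2(\sigma')$; but $d_1\sigma$ and $d_1\sigma'$ are the two distinct copies of the diagonal $\{i-1,i+1\}$, so $\sigma \neq \sigma'$ in $X$. Thus $\edgemap_2$ fails to be injective and $X$ is not spiny.

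For the converse I would assume no common ear and prove $X$ is edgy, first reducing to degree $2$. Because $X$ has no nondegenerate loops, a spine edge of any $x \in X_m$ is degenerate precisely when its two endpoints coincide, so the positions of the identity edges in the spine of $x$ recover the Eilenberg--Zilber surjection presenting $x$, and the remaining edges form the spine of the underlying nondegenerate simplex, of degree $\le 2$. Hence injectivity of $\edgemap_m$ for all $m$ follows from that of $\edgemap_1$ (trivial) and $\edgemap_2$. For $\edgemap_2$, a collision can only occur between two nondegenerate triangles, since a degenerate $2$-simplex carries an identity edge in its spine while a nondegenerate triangle has three distinct vertices. A collision within a single $H(T)$ is impossible because $H(T) \subseteq \rep{n}$ and subobjects of edgy symmetric sets are edgy (the Segal maps simply restrict). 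A collision between a triangle $\sigma$ of $H(T)$ and a triangle $\sigma'$ of $H(T')$ would force both spine edges of $\sigma$ to be shared, i.e.\ to be spine edges $\{a,a\pm1\}$; the only nondegenerate way for two consecutive spine edges to cobound a triangle is the ear shape, so $\sigma = \sigma' = (i-1,i,i+1)$ with this ear lying in both $T$ and $T'$, contradicting the hypothesis.

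The main obstacle is the degree-$2$ cross-copy analysis in the converse: one must verify that a shared spine between a $T$-triangle and a $T'$-triangle forces \emph{both} spine edges into $\spine{n}$, and then that two consecutive spine edges can only cobound an ear. The bookkeeping of orientations (spine edges appear in both directions, and triangles appear in all six vertex orders in a symmetric set) together with the reduction of higher-degree edginess to degree $2$ are where care is needed, but neither is a genuine difficulty once the levelwise description of the pushout is in hand.
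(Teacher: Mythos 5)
Your overall strategy is the paper's: compute the pushout levelwise, handle the forward direction by exhibiting the two copies of the common ear with equal spines, and for the converse reduce to injectivity of $\edgemap_2$ and analyze a cross-copy collision by observing that a shared spine must lie in $\spine{n}$. The forward direction and the heart of the degree-$2$ cross-copy analysis are correct and essentially identical to the paper's.

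The genuine gap is in your reduction to degree $2$. You argue that the positions of the identity edges in the spine of $x\in X_m$ recover the Eilenberg--Zilber surjection and that the underlying nondegenerate simplex has degree $\le 2$. Both claims fail here because $X$ is a \emph{symmetric} set: its underlying simplicial set is not $2$-skeletal. For instance $(0,1,0,1,\dots)\in\spine{n}_m\subseteq X_m$ is simplicially nondegenerate of arbitrary degree $m$ with no identity edges in its spine, and the symmetrically degenerate simplex $(i,i+1,i)=\sigma^*(i,i+1)$ for the non-monotone surjection $\sigma=(0,1,0)$ also has no identity edge in its spine. The same oversight infects your degree-$2$ case split: the dichotomy ``degenerate, hence an identity in the spine'' versus ``nondegenerate triangle with three distinct vertices'' omits exactly these folded simplices. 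The paper avoids all of this by citing a theorem (a $2$-dimensional symmetric set is spiny iff $\edgemap_2$ is injective) for the reduction. Your argument is repairable without that citation -- every simplex of $X$ is a function $[m]\to[n]$ lying in one of the two copies and is determined by its spine within that copy, so only cross-copy collisions matter; for those, every spine edge lies in $\spine{n}$, forcing consecutive values to differ by at most $1$ and (since the image has at most three elements) the image to be contained in some $\{i-1,i,i+1\}$, with the size-$3$ case excluded by hypothesis and the size-$\le 2$ case landing in $\spine{n}$ -- but as written the reduction step is incorrect and must be replaced.
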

\begin{proof}
Let $X= H(T) \amalg_{\spine{n}} H(T')$.
If $(i-1,i,i+1) \in T\cap T'$, then there are distinct 2-simplices having spine $i-1 \to i \to i+1$, one coming from $H(T)$ and the other coming from $H(T')$, so the pushout of symmetric sets is not spiny.
	
Suppose $(i-1,i,i+1) \notin T\cap T'$ for any $i$.
To prove that the 2-dimensional symmetric set $X$ is spiny, it suffices to show that $\edgemap_2 \colon X_2 \to X_1 \times_{X_0} X_1$ is injective \cite[Theorem 9]{HackneyMolinier:DPG}.
As the subobjects $H(T)$, $H(T')$ of $X$ are spiny, it suffices to consider $x \in H(T)_2$ and $x' \in H(T')_2$ such that $\edgemap_2(x) = \edgemap_2(x')$ in $X$.
Since the intersection of $H(T)$ and $H(T')$ in $X$ is $\spine{n}$, both edges 
\[ \begin{tikzcd}
k_0 \rar{f} & k_1 \rar{g} & k_2,
\end{tikzcd} \]
appearing in $\edgemap_2(x)$ are in $\spine{n}$, hence $|k_1 - k_0| \leq 1$ and $|k_2 - k_1| \leq 1$.
We thus have $\{k_0, k_1, k_2\} \subseteq \{i-1, i, i+1\}$ for some $i$; by our assumption, equality does not hold.
Further, $\{k_0, k_1, k_2\} \neq \{i-1, i+1\}$ since it would imply either $f$ or $g$ not in $\spine{n}$.
We conclude that $\{k_0, k_1, k_2\}$ is a subset of either $\{i, i- 1\}$ or $\{i,i+1\}$, so $x,x' \in \spine{n}$.
Thus $x=x'$ since $\spine{n}$ is spiny. 
\end{proof}

\begin{definition}
\label{compatible}
If $T$ and $T'$ are two triangulations of the regular $(n{+}1)$-gon such that $H(T) \amalg_{\spine{n}} H(T')$ is spiny, we write $\NA^{T,T'}$ for this partial groupoid.
We say that the pair $T,T'$ is \emph{compatible} in this case.
\end{definition}

See Figures~\ref{fig pentagon not compatible} and \ref{fig pentagon compatible plus NA}.
Note that $\NA$ from \cite[\S2.3]{HackneyLynd:HSSPG} is $\NA^{T,T'}$ for $T,T'$ the two triangulations of a square. 

\begin{figure}
\begin{tikzpicture}[scale=2,thick]
  \node[regular polygon,
        regular polygon sides=5,
        minimum size=2cm,
        draw,
        rotate=144] (P1) at (0,0) {};

  \foreach \i in {1,...,5} {
    \node[inner sep=1pt] at ($ (P1.center)!1.2!(P1.corner \i) $)
      {\the\numexpr\i-1\relax};
  }

\begin{scope}
  \clip (P1.corner 1) -- (P1.corner 2) -- (P1.corner 3) -- 
        (P1.corner 4) -- (P1.corner 5) -- cycle;  
  \draw (P1.corner 5) -- (P1.corner 2);
  \draw (P1.corner 5) -- (P1.corner 3);
\end{scope} 

  \node[regular polygon,
        regular polygon sides=5,
        minimum size=2cm,
        draw,
        rotate=144] (P2) at (1.5,0) {};

  \foreach \i in {1,...,5} {
    \node[inner sep=1pt] at ($ (P2.center)!1.2!(P2.corner \i) $)
      {\the\numexpr\i-1\relax};
  }

\begin{scope}
  \clip (P2.corner 1) -- (P2.corner 2) -- (P2.corner 3) -- 
        (P2.corner 4) -- (P2.corner 5) -- cycle;  
  \draw (P2.corner 3) -- (P2.corner 1);
  \draw (P2.corner 3) -- (P2.corner 5);
\end{scope}

\end{tikzpicture}
\caption{Incompatible triangulations of the pentagon}\label{fig pentagon not compatible}
\end{figure}
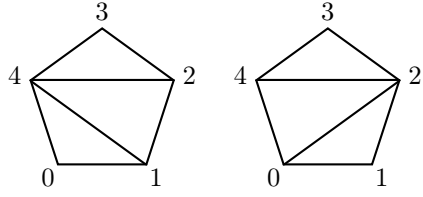

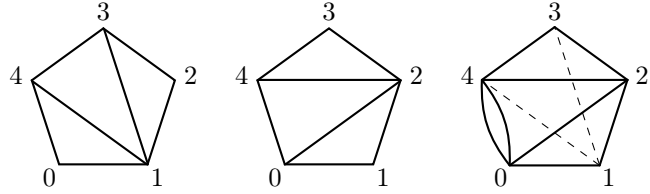
\begin{figure}
\begin{tikzpicture}[scale=2,thick]
  \node[regular polygon,
        regular polygon sides=5,
        minimum size=2cm,
        draw,
        rotate=144] (P1) at (0,0) {};

  \foreach \i in {1,...,5} {
    \node[inner sep=1pt] at ($ (P1.center)!1.2!(P1.corner \i) $)
      {\the\numexpr\i-1\relax};
  }

\begin{scope}
  \clip (P1.corner 1) -- (P1.corner 2) -- (P1.corner 3) -- 
        (P1.corner 4) -- (P1.corner 5) -- cycle;  
  \draw (P1.corner 2) -- (P1.corner 4);
  \draw (P1.corner 2) -- (P1.corner 5);
\end{scope} 

  \node[regular polygon,
        regular polygon sides=5,
        minimum size=2cm,
        draw,
        rotate=144] (P2) at (1.5,0) {};

  \foreach \i in {1,...,5} {
    \node[inner sep=1pt] at ($ (P2.center)!1.2!(P2.corner \i) $)
      {\the\numexpr\i-1\relax};
  }

\begin{scope}
  \clip (P2.corner 1) -- (P2.corner 2) -- (P2.corner 3) -- 
        (P2.corner 4) -- (P2.corner 5) -- cycle;  
  \draw (P2.corner 3) -- (P2.corner 1);
  \draw (P2.corner 3) -- (P2.corner 5);
\end{scope}

\node[regular polygon,
      regular polygon sides=5,
      minimum size=2cm,
      rotate=144,
      draw=none] (P) at (3,0) {};

\foreach \i in {1,...,5} {
  \coordinate (\the\numexpr\i-1\relax) at (P.corner \i);
}

\foreach \i in {1,...,5} {
  \node[inner sep=1pt] at ($ (P.center)!1.2!(P.corner \i) $)
    {\the\numexpr\i-1\relax};
}


\begin{scope}[line join=round, line cap=round]
\draw (0)--(1);
\draw (1)--(2);
\draw (2)--(3);
\draw (3)--(4);

\draw (0)--(2);
\draw (2)--(4);
\end{scope}

\draw[dashed, thin] (1)--(3);
\draw[dashed, thin] (1)--(4);

\draw[bend right=20] (0) to (4);
\draw[bend left=20] (0) to (4);

\end{tikzpicture}
\caption{Compatible triangulations of the pentagon and $\NA^{T,T'}$}\label{fig pentagon compatible plus NA}
\end{figure}

\begin{proposition}\label{prop iden long edges}
A partial groupoid $X$ is embeddable if and only if for each compatible pair $T,T'$ of triangulations of an $(n{+}1)$-gon, any map $\NA^{T,T'} \to X$ identifies the two long edges $0\to n$.
\end{proposition}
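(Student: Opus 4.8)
The plan is to set up a dictionary between two descriptions of the same data: a triangulation $T$ of the $(n{+}1)$-gon, viewed as a complete parenthesization of a length-$n$ word, and a maximal sequence of iterated inner face operators $d_{i_1}\cdots d_{i_{n-1}}$ reducing such a word to a single edge. The bridge is that a map $H(T)\to X$ out of a single triangulated polygon is completely determined by its restriction to $\spine{n}$: since $X$ is spiny, each triangle of $T$ has at most one filler with prescribed spine, so all fillers — and in particular the image of the long edge $0\to n$ — are forced, and they realize precisely the iterated inner face of the spine word obtained by peeling the triangles of $T$ in any order. I would record as a preliminary observation that the image of the long edge therefore depends only on $T$ and equals the corresponding iterated inner face, and that inside $\NA^{T,T'}$ the spine word $s$ satisfies $\ell \leftsquigarrow s \rightsquigarrow \ell'$, where $\ell,\ell'$ are the two long edges coming from $H(T)$ and $H(T')$.

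For the forward direction, assume $X$ is embeddable and let $\phi\colon \NA^{T,T'}\to X$ be arbitrary, with $T,T'$ compatible. Writing $w=\phi(s)$ for the edgewise image of the spine word, the two reductions $s\rightsquigarrow \ell$ and $s\rightsquigarrow \ell'$ are preserved by $\phi$ (a morphism carries inner faces to inner faces), so $w\rightsquigarrow \phi(\ell)$ and $w\rightsquigarrow \phi(\ell')$ in $X$, whence $[\phi(\ell)]=[w]=[\phi(\ell')]$ in $\tau X$. Because $X$ is spiny, the monomorphism $X\to N\tau X$ is detected on $1$-simplices (as in the proof of \cref{embeddability theorem}), so $X_1\to \operatorname{Mor}(\tau X)$ is injective and $\phi(\ell)=\phi(\ell')$. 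Thus $\phi$ identifies the two long edges.

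For the reverse direction, assume the orthogonality condition and suppose for contradiction that $X$ is not embeddable. By \cref{embeddability theorem} some word is mean; choose a mean word $w=(f_1,\dots,f_n)$ of minimal length. Its two witnessing inner-face sequences determine triangulations $T,T'$, and by the dictionary the two distinct outputs are exactly the images of the long edges under the maps $H(T)\to X$ and $H(T')\to X$ sending the spine to $w$; in particular $T\neq T'$. I would then argue that $T,T'$ is compatible: if not, by \cref{prop generalized NA} they share a triangle $(i-1,i,i+1)$, and peeling this common ear first combines $f_i,f_{i+1}$ into a single edge $g=f_{i+1}\circ f_i$ that is well defined and common to both reductions by spininess, so the shortened word $(f_1,\dots,f_{i-1},g,f_{i+2},\dots,f_n)$ is again mean with the same two distinct outputs, contradicting minimality. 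Hence $T,T'$ is compatible, the two maps agree on $\spine{n}$ and glue to a map $\NA^{T,T'}\to X$ sending the two long edges to the two distinct outputs, violating the hypothesis.

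The main obstacle is the preliminary dictionary: making precise, and justifying via spininess, that a map defined on the spine extends uniquely over a triangulated polygon and that the image of the long edge is simultaneously independent of the peeling order and equal to the corresponding iterated inner face $d_{i_1}\cdots d_{i_{n-1}}$ of the spine word. Once this is in place both implications are short; the only remaining subtlety is the minimality/ear-peeling step that upgrades ``the two triangulations differ'' to ``the two triangulations are compatible,'' which is exactly the hypothesis of \cref{prop generalized NA}.
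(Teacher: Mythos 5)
Your proof is correct and follows essentially the same route as the paper's: the forward direction uses the image of the spine to produce $f \leftsquigarrow w \rightsquigarrow g$ and concludes from injectivity on $1$-simplices, and the reverse direction takes a minimal-length witness of non-embeddability, notes the two triangulations cannot share an ear $(i-1,i,i+1)$ by minimality (hence are compatible by \cref{prop generalized NA}), and assembles the map $\NA^{T,T'} \to X$ from the pushout. The ``dictionary'' you flag as the main obstacle --- that spininess forces unique fillers so a map $H(T)\to X$ is determined by the spine and the long edge realizes the iterated inner face independently of peeling order --- is exactly what the paper also relies on (with the phrase ``by design''), so your explicit attention to it is, if anything, a point in your favor.
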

\begin{proof}
Suppose a map $\NA^{T,T'} \to X$ sends the long edges to $f,g \in X_1$. 
The image of $\spine{n} \subset \NA^{T,T'}$ produces a word $w\in \words(X)_n$ such that $f \leftsquigarrow w \rightsquigarrow g$, so $[f] = [g] \in \tau X$.
If $X$ is embeddable, then $f = g$.

Conversely, suppose $f\neq g\in X_1$ with $[f] = [g]$.
By \cref{thm injective} there is a word $w$ with $f \leftsquigarrow w \rightsquigarrow g$.
Assume $w$ has the minimal length (necessarily greater than two) with respect to this property.
The parenthesizations of $w = u_1 \cdots u_k$ yielding $f$ and $g$ cannot both contain $(u_iu_{i+1})$ for any $i = 1,\dots,k-1$, or there would exist a word of shorter length.
Letting $T$ and $T'$ be the triangulations associated to the two parenthesizations, we thus have $T,T'$ is a compatible pair by \cref{prop generalized NA}.
By design, we have a map $\NA^{T,T'} \to X$ which takes the spine to $w$ and the two long edges to $f,g$.
\end{proof}

Let $\circspine{n} \subseteq \sk_1\rep{n}$ be the usual spine together with the long edge $0\to n$ (the \emph{circular spine}).
A slight variation on the proof of \cref{prop generalized NA} gives the following.

\begin{proposition}
Let $T$ and $T'$ be two triangulations of a regular $(n{+}1)$-gon.
The symmetric set $H(T) \amalg_{\circspine{n}} H(T')$ is spiny if and only $H(T) \amalg_{\spine{n}} H(T')$ is spiny and
\[ (n-1,n,0), (n,0,1) \notin T\cap T'.\]
\end{proposition}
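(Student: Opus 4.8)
The plan is to rerun the proof of \cref{prop generalized NA} with $\spine{n}$ replaced throughout by $\circspine{n}$; the single new feature is that the glued subobject now also contains the long edge $0\to n$. Write $X^{\circ}=H(T)\amalg_{\circspine{n}}H(T')$. Since $X^{\circ}$ is $2$-dimensional, I would again invoke \cite[Theorem 9]{HackneyMolinier:DPG} to reduce spininess to injectivity of $\edgemap_2\colon X^{\circ}_2\to X^{\circ}_1\times_{X^{\circ}_0}X^{\circ}_1$. As $H(T)$ and $H(T')$ are spiny and their images in the pushout meet exactly in $\circspine{n}$, it suffices to compare $x\in H(T)_2$ and $x'\in H(T')_2$ with $\edgemap_2(x)=\edgemap_2(x')$; such an equality forces both principal edges $k_0\xrightarrow{f}k_1\xrightarrow{g}k_2$ to lie in $\circspine{n}$, so each of $f,g$ is either a short edge $\{j,j+1\}$ or the long edge $\{0,n\}$. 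The combinatorial content is then to determine which nondegenerate triangles arise this way.

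For necessity I would argue contrapositively. If $H(T)\amalg_{\spine{n}}H(T')$ is not spiny, then by \cref{prop generalized NA} some $(i-1,i,i+1)$ with $1\le i\le n-1$ lies in $T\cap T'$; the two triangles with common spine $i-1\to i\to i+1$, whose edges already lie in $\spine{n}\subseteq\circspine{n}$, remain distinct and parallel in $X^{\circ}$, so $X^{\circ}$ is not spiny either. Likewise, if $(n-1,n,0)\in T\cap T'$, then the two triangles with spine $n-1\to n\to 0$ — using the short edge $\{n-1,n\}$ and the long edge $\{n,0\}$, both in $\circspine{n}$ — witness a failure of spininess, and symmetrically for $(n,0,1)$ via the spine $n\to 0\to 1$.

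For sufficiency, assume $H(T)\amalg_{\spine{n}}H(T')$ is spiny and the two wrapping triples are excluded, and carry out the case analysis on $f,g\in\circspine{n}$. If neither is the long edge, the argument of \cref{prop generalized NA} applies verbatim: a nondegenerate $x$ forces $\{k_0,k_1,k_2\}=\{i-1,i,i+1\}$ for some $1\le i\le n-1$, excluded by spininess of $H(T)\amalg_{\spine{n}}H(T')$, while the remaining possibilities put $x,x'$ in $\spine{n}$ and hence equal. If exactly one of $f,g$ is the long edge, then $k_1\in\{0,n\}$ and the adjacent short edge pins down the third vertex, so $\{k_0,k_1,k_2\}$ is either $\{n-1,n,0\}$ or $\{n,0,1\}$, both excluded by hypothesis. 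If both $f,g$ are the long edge, then $k_0=k_2$, the triangle is degenerate and lands in $\circspine{n}$, whence $x=x'$. In every surviving case $x=x'$, so $\edgemap_2$ is injective and $X^{\circ}$ is spiny.

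The main obstacle I anticipate is bookkeeping in the symmetric setting: I must track both orientations of each edge (the long edge $0\to n$ generates $n\to 0$ as well), confirm that the degenerate two-vertex simplices genuinely lie in $\circspine{n}$ rather than producing a spurious collision, and justify — exactly as in \cref{prop generalized NA}, using that a pushout of monomorphisms in a presheaf category glues along precisely the common subobject — that the images of $H(T)$ and $H(T')$ in $X^{\circ}$ intersect in $\circspine{n}$ and no more.
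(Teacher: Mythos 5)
Your proposal is correct and follows exactly the route the paper intends: the paper gives no separate proof, stating only that ``a slight variation on the proof of \cref{prop generalized NA} gives the following,'' and your case analysis (both principal edges short; exactly one equal to the long edge, forcing the vertex set $\{n-1,n,0\}$ or $\{n,0,1\}$; both equal to the long edge, forcing a degenerate simplex inside $\circspine{n}$) is precisely that variation. The bookkeeping points you flag at the end are handled the same way as in \cref{prop generalized NA}, so there is nothing to add.
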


\begin{definition}
\label{well behaved}
If $T$ and $T'$ are two triangulations of the regular $(n{+}1)$-gon such that $H(T) \amalg_{\circspine{n}} H(T')$ is spiny, we say the pair $T,T'$ is \emph{well-behaved} and write $\NNA^{T,T'}$ for this partial groupoid.
\end{definition}

Notice that if $n>3$ and $T,T'$ are adjacent in the flip graph (\cref{flip graphs}), then necessarily the pair is either not compatible or not well-behaved.

\begin{lemma}\label{lem cut edges}
Let $X$ be a partial groupoid, $g,g',h,h',f \in X_1$. 
Suppose $(f,g) \rightsquigarrow h$ and $(f,g') \rightsquigarrow h'$.
Then $g=g'$ if and only if $h=h'$. (Similarly if $(g,f) \rightsquigarrow h$ and $(g',f) \rightsquigarrow h'$.)
\end{lemma}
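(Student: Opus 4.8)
The plan is to translate both hypotheses into statements about single $2$-simplices and then exploit two structural facts about the spiny symmetric set $X$: spininess makes the Segal map $\edgemap_2 \colon X_2 \to X_1 \times_{X_0} X_1$ injective, and the automorphism group $\operatorname{Aut}_\Upsilon([2]) = S_3$ acts on $X_2$ compatibly with the (simplicial) face maps. Unwinding the definition of $\rightsquigarrow$ on a length-two word, the hypothesis $(f,g) \rightsquigarrow h$ says precisely that there is a $2$-simplex $\sigma \in X_2$ with $\edgemap_2(\sigma) = (f,g)$ and $d_1\sigma = h$, and this $\sigma$ is unique by injectivity of $\edgemap_2$; likewise $(f,g') \rightsquigarrow h'$ yields a unique $\sigma' \in X_2$ with $\edgemap_2(\sigma') = (f,g')$ and $d_1\sigma' = h'$.

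The forward direction is then immediate: if $g = g'$ then $\edgemap_2(\sigma) = (f,g) = (f,g') = \edgemap_2(\sigma')$, so injectivity gives $\sigma = \sigma'$ and hence $h = d_1\sigma = d_1\sigma' = h'$. For the converse I would apply the transposition $t = (0\,1) \in \operatorname{Aut}_\Upsilon([2])$, which acts on $X_2$ as an involution. The key computation is that $t^*$ sends a $2$-simplex with face data $(d_0,d_1,d_2) = (g,h,f)$ to one with face data $(h, g, \bar f)$, where $\bar f = \rho^* f$ is the reverse edge obtained from the nontrivial element $\rho \in \operatorname{Aut}_\Upsilon([1])$. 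Thus $t^*\sigma$ has spine $(\bar f, h)$ and long edge $g$, and $t^*\sigma'$ has spine $(\bar f, h')$ and long edge $g'$; in other words $t^*$ performs the ``cut'' turning $(f,g) \rightsquigarrow h$ into $(\bar f, h) \rightsquigarrow g$. Granting this, the converse is nothing but the forward direction applied to the cut simplices: if $h = h'$ then $t^*\sigma$ and $t^*\sigma'$ share the spine $(\bar f, h)$, so injectivity forces $t^*\sigma = t^*\sigma'$, and applying the involution $t^*$ once more gives $\sigma = \sigma'$, whence $g = d_0\sigma = d_0\sigma' = g'$. The parenthetical case, where $f$ sits on the right, is identical after replacing $t$ by $(1\,2)$, which converts a $2$-simplex with spine $(g,f)$ and long edge $h$ into one with spine $(h,\bar f)$ and long edge $g$.

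The face-map computation for $t^*\sigma$ is the only genuinely non-formal ingredient, and I expect it to be the place where the orientation conventions must be pinned down: concretely, one factors each composite $t \circ \delta_i \colon [1] \to [2]$ through the appropriate coface and, when necessary, the nontrivial element of $\operatorname{Aut}_\Upsilon([1])$, and then invokes contravariant functoriality of $X$. One must verify in particular that it is the reverse edge $\bar f$, rather than $f$ itself, that appears as the $d_2$-face of $t^*\sigma$, and that the long edge of $t^*\sigma$ really is the old $d_0$-face $g$. Everything past that point is a bookkeeping-free consequence of injectivity of $\edgemap_2$. I would also note that this argument is purely a statement about the $2$-simplices of $X$ and their $S_3$-action; it does not rely on \cref{thm injective}, so it holds for any partial groupoid.
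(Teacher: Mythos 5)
Your proof is correct, and it lands on the same key identity as the paper's --- namely that $(f,g)\rightsquigarrow h$ can be converted into $(f^{-1},h)\rightsquigarrow g$, after which the backward direction is just the forward direction applied to the rotated data --- but the mechanism is different. The paper produces the $2$-simplex $[f^{-1}|h]$ by first asserting that $[f^{-1}|f|g]$ is a $3$-simplex and then computing $g = d_1d_1[f^{-1}|f|g] = d_1d_2[f^{-1}|f|g] = d_1[f^{-1}|h]$; you instead stay in degree $2$ and obtain the same simplex as $t^*\sigma$ for the transposition $t=(0\,1)\in\operatorname{Aut}_\Upsilon([2])$. Your face-map computation checks out: $t\circ\delta_0=\delta_1$, $t\circ\delta_1=\delta_0$, and $t\circ\delta_2=\delta_2\circ\rho$, so $t^*\sigma$ indeed has spine $(f^{-1},h)$ and long edge $g$, and the involutivity of $t^*$ plus injectivity of $\edgemap_2$ finishes the argument (the $(1\,2)$ variant for the parenthetical case is likewise correct). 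If anything, your version is slightly more self-contained: the paper's step ``$[f|g]\in X_2$ implies $[f^{-1}|f|g]\in X_3$'' is itself a consequence of the symmetric structure (pull back along a suitable surjection $[3]\to[2]$ in $\Upsilon$, or invoke the partial-group axioms), whereas your transposition argument makes the use of the $\Upsilon$-action explicit and never leaves dimension $2$. Both proofs use spininess only through injectivity of $\edgemap_2$ and are independent of \cref{thm injective}, as you note.
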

\begin{proof}
The forward direction is clear.
For the reverse, the assumption implies $[f|g]$ and $[f|g']$ are 2-simplices, hence $[f^{-1} | f | g]$ and  $[f^{-1} | f | g']$ are 3-simplices.
But then we have 
\[
	g = d_1 d_1 [f^{-1}|f|g] = d_1 d_2 [f^{-1}|f|g] = d_1[f^{-1}|h]
\]
and similarly $g' = d_1[f^{-1}|h']$, so  $h=h'$ implies $g=g'$.
\end{proof}

If $C$ is a category, then an object $x \in C$ is \emph{orthogonal} to a map $f\colon a \to b$ if for each map $g\colon a \to x$ there is a unique map $h \colon b \to x$ with $g=hf$.
The following is a refinement of \cref{theorem reflective} by \cite[Theorem 1.39]{AdamekRosicky:LPAC}.

\begin{proposition}
A partial groupoid $X$ is embeddable if and only if it is orthogonal to the set of maps 
\[ \left\{\NA^{T,T'} \to \NNA^{T,T'} \mid T,T' \text{ well-behaved} \right\}.\]
\end{proposition}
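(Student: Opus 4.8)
The plan is to unpack the definition of orthogonality and match it up with the "identifies the two long edges" condition from \cref{prop iden long edges}. Recall that $X$ is orthogonal to a map $j\colon \NA^{T,T'} \to \NNA^{T,T'}$ means: for every map $\phi\colon \NA^{T,T'} \to X$, there is a \emph{unique} $\psi\colon \NNA^{T,T'} \to X$ with $\phi = \psi \circ j$. So first I would pin down exactly what the map $j$ is. Since $T,T'$ is well-behaved, it is in particular compatible, and both $\NA^{T,T'}$ and $\NNA^{T,T'}$ are built as pushouts of the same two triangulations $H(T), H(T')$ — the only difference is the gluing object, the plain spine $\spine{n}$ versus the circular spine $\circspine{n}$. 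The inclusion $\spine{n} \hookrightarrow \circspine{n}$ (adding the long edge $0\to n$) induces the canonical comparison map $j\colon \NA^{T,T'} \to \NNA^{T,T'}$, which is the identity on everything except that it collapses the two long edges $0\to n$ of $\NA^{T,T'}$ into the single long edge of $\NNA^{T,T'}$.

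\textbf{Forward direction.} Suppose $X$ is embeddable, and fix a well-behaved pair $T,T'$ and a map $\phi\colon \NA^{T,T'} \to X$. By \cref{prop iden long edges}, $\phi$ identifies the two long edges $0 \to n$, sending both to a common $f = g \in X_1$. Since the only difference between $\NA^{T,T'}$ and $\NNA^{T,T'}$ is the identification of these two long edges, $\phi$ factors (set-theoretically, and compatibly with all faces since $\phi$ was a map of symmetric sets) through $j$, giving at least one $\psi$. For uniqueness, I would argue that every simplex of $\NNA^{T,T'}$ is the image under $j$ of a simplex of $\NA^{T,T'}$ — that is, $j$ is an epimorphism of symmetric sets — so any two factorizations must agree. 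This surjectivity is immediate because $\NNA^{T,T'}$ is generated by the same triangles of $H(T)$ and $H(T')$, now glued along the larger $\circspine{n}$, and all of those triangles already live in $\NA^{T,T'}$.

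\textbf{Reverse direction.} Suppose $X$ is orthogonal to the whole set. Given a \emph{compatible} pair $T,T'$ and a map $\phi\colon \NA^{T,T'}\to X$, I want to show $\phi$ identifies the long edges, so that \cref{prop iden long edges} yields embeddability. The subtlety, and the \textbf{main obstacle}, is that orthogonality only provides factorizations against \emph{well-behaved} pairs, whereas \cref{prop iden long edges} quantifies over all \emph{compatible} pairs; a compatible pair need not be well-behaved precisely when $(n-1,n,0)$ or $(n,0,1)$ lies in $T\cap T'$. So the real work is to reduce the compatible case to the well-behaved case. I would handle this by a minimality argument paralleling the proof of \cref{prop iden long edges}: given $f\neq g$ with $[f]=[g]$, take a minimal-length word $w$ with $f \leftsquigarrow w \rightsquigarrow g$ and the associated compatible pair; I would then show that minimality of $w$ forces the offending triangles $(n-1,n,0)$ and $(n,0,1)$ out of $T\cap T'$, using \cref{lem cut edges} to cancel a shared boundary composite and produce a strictly shorter witnessing word, contradicting minimality. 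Thus a minimal witness already yields a well-behaved pair, and orthogonality against that single map produces the factorization through $\NNA^{T,T'}$, which forces the two long edges to coincide — contradicting $f \neq g$. The bookkeeping in \cref{lem cut edges} (matching up the boundary edges near the vertices $0$ and $n$ and checking the relevant $2$-simplices exist) is where I expect the genuine effort to lie; the abstract orthogonality manipulation on either side is formal once $j$ is identified as the long-edge-collapsing epimorphism.
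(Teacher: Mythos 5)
Your overall strategy---identify $j\colon \NA^{T,T'}\to\NNA^{T,T'}$ as the surjection collapsing the two long edges, so that orthogonality to $j$ says exactly that every map $\NA^{T,T'}\to X$ identifies the long edges, and then reduce from compatible pairs to well-behaved pairs via \cref{lem cut edges}---is the same as the paper's, and your forward direction is complete. The gap is in how you run the minimality argument in the reverse direction. You fix a single pair $f\neq g$ with $[f]=[g]$ and take $w$ of minimal length with $f\leftsquigarrow w\rightsquigarrow g$. If the resulting pair $T,T'$ fails to be well-behaved, say $(n-1,n,0)\in T\cap T'$, then both parenthesizations end by multiplying by the last letter $u_n$, and \cref{lem cut edges} produces edges $p\neq p'$ (the images of the edge $0\to n-1$ from the two sides) with $p\leftsquigarrow (u_1,\dots,u_{n-1})\rightsquigarrow p'$. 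But this shorter word witnesses the coincidence $[p]=[p']$ of a \emph{different} pair of distinct edges; it is not a shorter witness for the fixed pair $f,g$, so it does not contradict the minimality you assumed. The statement you need is obtained by minimizing over \emph{all} pairs of distinct edges identified in $\tau X$ simultaneously (equivalently, by induction on word length), not over witnesses for a fixed pair.

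The paper sidesteps this bookkeeping by arguing at the level of maps rather than minimal witnesses: given any map $\NA^{T,T'}\to X$ with $T,T'$ compatible but not well-behaved, deleting the vertex $n$ (or $0$) yields a compatible pair $S,S'$ of triangulations of an $n$-gon and a composite $\NA^{S,S'}\to\NA^{T,T'}\to X$, and \cref{lem cut edges} shows the new map identifies its long edges if and only if the original one does; iterating terminates in a well-behaved pair, so \cref{prop iden long edges} holds verbatim with ``compatible'' replaced by ``well-behaved.'' Either route works once the induction is set up correctly, and both hinge on exactly the lemma you identified.
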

\begin{proof}
Suppose $T,T'$ is a compatible pair of triangulations of an $(n{+}1)$-gon (in particular $n > 2$), and suppose we have a map $\NA^{T,T'} \to X$ with $T,T'$ not well-behaved (we may then conclude $n > 3$).
We either have $(n-1,n,0) \in T \cap T'$ or $(n,0,1) \in T\cap T'$.
Deleting the vertex $n$ in the first case, or the vertex $0$ in the second, yields a compatible pair of triangulations $S,S'$ of an $n$-gon, along with the composite map
\[
	\NA^{S,S'} \to \NA^{T,T'} \to X.
\]
By \cref{lem cut edges}, $\NA^{S,S'} \to X$ identifies long edges if and only if $\NA^{T,T'} \to X$ identifies long edges.
We may continue this process until we arrive at a well-behaved pair.
Thus the statement of \cref{prop iden long edges} holds with `compatible pair' replaced by `well-behaved pair.'
But for a well-behaved pair $T,T'$, $\NA^{T,T'} \to X$ identifies the long edges if and only if it factors through $\NNA^{T,T'}$.
\end{proof}

\section{Degree of \texorpdfstring{$\NA^{T,T'}$}{NATT}}

The \emph{degree} is an invariant of symmetric sets introduced in \cite{HackneyLynd:HSSPG}, which is based on the notion of higher Segal space \cite{Dyckerhoff:CPOC}.
As an example, $\deg(X) = 1$ if and only if $X$ is a groupoid.
In \cite[Example 9.8]{HackneyLynd:HSSPG} it was shown that $\deg(\NA^{T,T'})=3$ for $T,T'$ the two triangulations of the square.

Let $T,T'$ be a compatible pair of triangulations of the regular $(n{+}1)$-gon.
A \emph{cone} from $T$ to $T'$ is a pair of triangles $(i-1,i,k)$ and $(i,i+1,k)$ in $T$ such that $(i-1,i,i+1)$ is a triangle in $T'$.
We say that a compatible pair $(T,T')$ \emph{has a cone} if there is a cone from $T$ to $T'$, or vice-versa.

\begin{proposition}\label{prop no cone}
If a compatible pair of triangulations $(T,T')$ does not have a cone, then $\deg(\NA^{T,T'}) = 2$. Otherwise, $\deg(\NA^{T,T'}) = 3$.
\end{proposition}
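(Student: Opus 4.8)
The plan is to first trap $\deg(\NA^{T,T'})$ between $2$ and $3$, and then to isolate a single combinatorial condition deciding between the two values. For the bounds: the symmetric set $\NA^{T,T'}$ is two-dimensional, so $\deg(\NA^{T,T'}) \le 3$, a two-dimensional symmetric set having degree at most three; and its two long edges $0\to n$ are distinct one-simplices (they lie in $H(T)$, $H(T')$ but not in $\spine{n}$) that become equal in $\tau$, so $\NA^{T,T'}$ is not the nerve of a groupoid and $\deg(\NA^{T,T'}) \ge 2$ by the characterization $\deg = 1 \iff \text{groupoid}$. Thus the whole content is whether the degree-three obstruction of \cite{HackneyLynd:HSSPG} is present, and by two-dimensionality this obstruction is supported on four vertices: it measures the failure of the relevant higher Segal map to fill a tetrahedral configuration assembled from the triangles of $T\cup T'$, exactly the situation analyzed for the square in \cite[Example 9.8]{HackneyLynd:HSSPG}.

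For the direction ``cone $\Rightarrow$ degree three'', suppose $(i-1,i,k),(i,i+1,k)\in T$ and $(i-1,i,i+1)\in T'$. These three two-simplices live on $\{i-1,i,i+1,k\}$, where the two $T$-triangles form a fan with apex $k$ over the spine path $i-1\to i\to i+1$ and the $T'$-triangle is the opposite flip. This is precisely the configuration that witnesses $\deg = 3$ for the square: there $T = \{012, 023\}$ is the fan with apex $0$ over $1,2,3$ and $T'\ni (1,2,3)$ is the flip, the fourth triangle $013$ playing no role in the obstruction. I would therefore reproduce \cite[Example 9.8]{HackneyLynd:HSSPG} on $\{i-1,i,i+1,k\}$: the fan-plus-flip is a nondegenerate element of the target of the relevant higher Segal map that admits no preimage (there is no filling three-simplex, as $\NA^{T,T'}$ is two-dimensional), so $\deg(\NA^{T,T'}) \ge 3$ and hence equals $3$.

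For the direction ``no cone $\Rightarrow$ degree two'', I must show the degree-three obstruction is absent, i.e.\ $\deg \le 2$. The essential point is that an un-fillable tetrahedral obstruction on four vertices requires the two triangulations to contribute there \emph{incompatibly}: a fan of two edge-sharing triangles over a spine path, supplied by one of $T,T'$, together with the opposing flip, supplied by the other. Edge-sharing triangles from a single $H(T)$ are consistent inside the genuine triangulation $T$ and, on their own, produce no obstruction; an obstruction appears only when the other triangulation also contributes a triangle on the same four vertices, in the fan-plus-flip pattern. Because $H(T)$ and $H(T')$ meet only along $\spine{n}$, the base of such a fan must consist of two consecutive spine edges $i-1\to i\to i+1$, and the opposing flip is then exactly $(i-1,i,i+1)$ --- a cone. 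A fan whose base uses a non-spine (diagonal) edge is instead reducible, and \cref{lem cut edges} lets me contract that edge and pass to fewer vertices. The plan is thus to enumerate how two triangles of $T\cup T'$ can share an edge and to check in each case that an essential obstruction forces a cone.

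The main obstacle is this last (no-cone) direction, which is a global statement. Making it precise requires identifying the degree-three higher Segal map concretely enough to recognize exactly which four-vertex configurations are essential, and then checking that every essential configuration is a cone --- ruling out any ``mixed'' arrangement of $T$- and $T'$-triangles that is un-fillable yet not of fan-plus-flip type. The compatibility hypothesis (no $(i-1,i,i+1)\in T\cap T'$) together with \cref{lem cut edges} should drive the case analysis, but organizing the reduction so that it terminates, and confirming that the cone witness produced in the other direction is genuinely essential rather than annihilated by degeneracies, is where the real work lies.
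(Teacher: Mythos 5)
Your upper and lower bounds, and your argument that a cone forces degree $3$, are correct and essentially identical to the paper's: two-dimensionality gives $\deg \le 3$, non-embeddability rules out being a groupoid so $\deg \ge 2$, and the fan-plus-flip on $\{i-1,i,i+1,k\}$ is a starry word whose three relevant faces are $2$-simplices but which cannot itself be a (necessarily nondegenerate) $3$-simplex of a $2$-dimensional symmetric set.

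The genuine gap is the direction ``no cone implies $\deg = 2$,'' which you explicitly leave as a plan. Two things are missing. First, you never pin down what $\deg(X)\le 2$ actually requires; the paper uses the concrete criterion of \cite[Proposition 4.10]{HackneyLynd:HSSPG} that $\deg(X)\le 2$ holds if and only if the injection $\bous_n\colon X_n \hookrightarrow \bswords(X)_n$ into starry words is surjective for all $n\ge 3$. Without some such statement your phrase ``the degree-three obstruction is supported on four vertices'' has no content: a priori the obstruction lives in starry words of arbitrary length $n$. The paper handles this by taking $n$ minimal with a starry word $x=(f_1,\dots,f_n)$ such that $d_1x,d_2x,d_3x\in X_{n-1}$ but $x\notin X_n$, then uses an explicit degeneracy argument ($\sigma^*d_ix = x$ when $f_i$ is an identity, and similarly for repeated entries) to show no $f_i$ is an identity and that $f_1,f_2,f_3$ each occur exactly once; only then does the configuration reduce to three triangles $(a,i_1,i_2)$, $(a,i_1,i_3)$, $(a,i_2,i_3)$ in $T\cup T'$. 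Second, your proposed reduction mechanism --- contracting a non-spine base edge of a fan via \cref{lem cut edges} and ``passing to fewer vertices'' --- is not developed and is not what is needed: that lemma is a cancellation statement about length-two words and plays no role here. The finishing move is instead that the three triangles cannot all lie in a single triangulation, so without loss of generality $(f_1,f_3)\in H(T')$ while the other two lie in $H(T)$; hence $f_1,f_3\in H(T)\cap H(T')=\spine{n}$, forcing $i_1=a\pm1$ and $i_3=a\mp1$, which is exactly a cone. Your endpoint is right, but the argument that gets there is absent.
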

\begin{proof}
Let $T,T'$ be a compatible pair of triangulations and $X = \NA^{T,T'}$.
Since $X$ is 2-dimensional, $\deg(X) \leq 3$ by \cite[Theorem 9.6]{HackneyLynd:HSSPG}. 
As $X$ is not embeddable in a groupoid, it cannot be a groupoid, so $1 < \deg(X)$.
We write $\bswords(X)_n$ for the set of tuples $(f_1, \dots, f_n) \in \prod X_1$ such that each $f_i$ has the same source; since $X$ is edgy there is an injection $\bous_n \colon X_n \hookrightarrow \bswords(X)_n$ (see \cite[\S2.2]{HackneyLynd:HSSPG}).
By \cite[Proposition 4.10]{HackneyLynd:HSSPG}, $\bous_n$ is surjective for every $n \geq k +1$ if and only if $\deg(X) \leq k$.
We show that $X$ has a cone if and only if $\deg(X) = 3$.

If $(T,T')$ has a cone then there is a starry word $x = (f_1, f_2, f_3) \in \bswords(X)_3$ as pictured such that $d_1x, d_2x, d_3x \in X_2$.
\[ \begin{tikzcd}[row sep=tiny]
& i-1 \\
i \drar["f_3"'] \urar["f_1"] \rar["f_2" description]& k \\
& i+1
\end{tikzcd} \]
This starry word $x$ is not in $X_3$, since if it were then it would be nondegenerate \cite[Lemma~2.13]{HackneyLynd:HSSPG}, and $X$ is 2-dimensional.
Thus $X$ is not lower $3$-Segal, hence $\deg(X) > 2$.

Conversely, suppose that $X$ is not lower $3$-Segal, i.e. $\deg(X) \geq 3$.
If we have a starry word $x = (f_1, \dots, f_n)$ with $f_i = \id_a$,
then $x\in X_n$ if and only if $d_ix = (f_1, \dots, \hat f_i, \dots, f_n) \in X_{n-1}$.
Indeed, if $\sigma \colon [n] \to [n-1]$ is the zero-preserving map defined by 
\[
	\sigma(t) = \begin{cases}
		0 & t = 0,i \\
		t & 1 \leq t \leq i-1 \\
		t-1 & i+1 \leq t \leq n
	\end{cases}
\]
then $\sigma^* d_i x = x$ when $f_i = \id_a$.
A similar argument shows that if $f_i = f_j$, then $d_ix \in X_{n-1}$ if and only if $x\in X_{n}$.

Since $X$ is not lower $3$-Segal, we can choose $n$ minimal such that there exists $x = (f_1, \dots, f_n) \in \bswords(X)_n$ with $d_1x, d_2x, d_3x \in X_{n-1}$, but $x\notin X_n$ \cite[Proposition~4.10]{HackneyLynd:HSSPG}. 
By the above remarks, no $f_i$ is an identity, and  $f_1, f_2$, and $f_3$ each appear exactly once among the $f_i$. 
\[ \begin{tikzcd}[row sep=tiny]
& i_1 \\
a \drar["f_3"'] \urar["f_1"] \rar["f_2" description]& i_2 \\
& i_3
\end{tikzcd} \]
The starry words $(f_1,f_2)$, $(f_1,f_3)$, and $(f_2,f_3)$ are in $X_2$.
We thus have the triangles $(a,i_1, i_2)$, $(a,i_1,i_3)$, and $(a,i_2,i_3)$ in $T\cup T'$.
They cannot all be in $T$ (resp. $T'$) since $T$ is a triangulation.
Without loss of generality, suppose $(f_1,f_2), (f_2,f_3) \in H(T)$ and $(f_1, f_3) \in H(T')$.
Then $f_1$ and $f_3$ are in $H(T) \cap H(T') = \spine{n}$, so we have $i_1 = a \pm 1$ and $i_3 = a \mp 1$.
We've thus constructed a cone.
\end{proof}

\newcommand{\ternassoc}{$\diamondsuit$}

Below we give an alternative characterization of degree in terms of the ternary associativity condition mentioned in \cref{ex pregroup}: 
a partial groupoid $X$ satisfies \ternassoc{} if its underlying partial binary composition has the property that whenever $ab$ and $bc$ are defined, then $(ab)c$ is defined if and only if $a(bc)$ is defined, with equality when both hold.

\begin{lemma}\label{lem assoc ternary vs degree}
If a partial groupoid $X$ does not satisfy \ternassoc, then $\deg(X) > 2$.
\end{lemma}
\begin{proof}
If $X$ does not satisfy \ternassoc, then there is a word $w=(c,b,a) \in \words(X)_3$ such that $ab$, $bc$, and $a(bc)$ exist, but either $(ab)c$ does not exist or it does and $(ab)c \neq a(bc)$.
The faces $d_0w = [b|a]$, $d_1w = [bc|a]$, and $d_3w = [c|b]$ are in $X_2$, but $w\notin X_3$. 
Thus $X$ is not lower 3-Segal by \cite[Remark 3.17]{HackneyLynd:HSSPG}.
\end{proof}

If $G$ is a group with at least four elements, then the 2-skeleton of $BG$ satisfies \ternassoc{} and has degree 3, so the converse of the lemma does not always hold.

\begin{proposition}
Let $(T,T')$ be a compatible pair of triangulations of a regular $(n+1)$-gon and $X = \NA^{T,T'}$. 
Then $X$ satisfies \ternassoc{} if and only if $\deg(X) = 2$.
\end{proposition}
\begin{proof}
The backwards direction follows immediately from \cref{lem assoc ternary vs degree}.
We use \cref{prop no cone} for the forward direction: if $X$ has degree 3, then $(T,T')$ has a cone, and without loss of generality we assume there are triangles $(i-1,i,k), (i,i+1,k)$ in $T$ and $(i-1,i,i+1)$ in $T'$.
There are edges $c\colon i-1 \to i$ and $b\colon i \to i+1$ in $\spine{n}$, $a\colon i+1 \to k$, $ab \colon i \to k$, and $(ab)c \colon i-1 \to k$ in $H(T)$, and $bc \colon i-1 \to i+1$ in $H(T')$.
If $a(bc)$ does not exist, then \ternassoc{} does not hold.
Suppose $a(bc)$ exists.
As $bc$ is not in $\spine{n}$, hence not in $H(T)$, it must be the case that $a\in \spine{n}$.
Since $k\neq i,i+1$, we have $k=i+2$ and $(i-1,i+1,i+2)$ is a triangle of $T'$.
But in this case $a(bc) \colon i-1 \to i+2$ in $H(T')$ is necessarily different from $(ab)c \colon i-1 \to i+2$ in $H(T)$, so again \ternassoc{} fails.
\end{proof}

\section{Reduction and embeddability}

The reduction of a simplicial or symmetric set identifies all of the vertices; if $X$ is a partial groupoid, then its reduction is a partial group \cite[Proposition 5.3]{HackneyLynd:PGSSS}.
\Cref{thm injective} has the following consequence.
Later we establish the converse.

\begin{lemma}\label{lem red emb}
Let $X$ be a partial groupoid. If the reduction of $X$ is an embeddable partial group, then $X$ is embeddable.
\end{lemma}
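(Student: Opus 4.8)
The plan is to reduce \cref{lem red emb} to the characterization of embeddability packaged in \cref{embeddability theorem}: a partial groupoid is embeddable precisely when any parallel pair $f,g\in X_1$ that becomes equal in the fundamental groupoid is already equal. Writing $q\colon X \to rX$ for the reduction, I would transport an identification $[f]=[g]$ in $\tau X$ along $q$ to the hypothetically embeddable $rX$, obtain $q(f)=q(g)$ there, and then pull this equality back to $X$.

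The one substantive point is the behavior of $q$ on $1$-simplices, so I would pin that down first. Reduction is the reflection onto reduced (symmetric) sets, realized as the pushout $rX = \rep{0}\amalg_{\sk_0 X} X$ that collapses all vertices to a point; since colimits in a presheaf category are computed objectwise and $\sk_0 X \cong \coprod_{X_0}\rep{0}$, on $1$-simplices this reads $(rX)_1 = {*}\amalg_{X_0} X_1$, where $X_0 \xrightarrow{s_0} X_1$ is the degeneracy. Consequently $q_1$ identifies all the identity edges $\{s_0 x\}$ with the single identity of $rX$ and is injective on the remaining (nondegenerate) edges. In particular, for a parallel pair $f,g\in X_1$, the equality $q(f)=q(g)$ forces $f=g$: if $f$ is nondegenerate then so is $g$ and injectivity applies, while if $f=\id_a$ then $q(g)$ is the identity, so $g$ is degenerate and, being parallel to $\id_a$, must equal $\id_a$.

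With this established, the argument is formal. Suppose $f,g\in X_1$ satisfy $[f]=[g]$ in $\tau X$; as the object set of $\tau X$ is $X_0$, the edges $f$ and $g$ are parallel. Applying $\tau$ to $q$ gives $[q(f)] = \tau(q)[f] = \tau(q)[g] = [q(g)]$ in $\tau(rX)$. Since $rX$ is a partial group by \cite[Proposition 5.3]{HackneyLynd:PGSSS} and is embeddable by hypothesis, \cref{embeddability theorem} yields $q(f)=q(g)$; the previous paragraph then gives $f=g$, and a second application of \cref{embeddability theorem} shows that $X$ is embeddable.

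I expect the only real obstacle to be the computation of $q$ on $1$-simplices, namely confirming that reduction collapses exactly the identity edges and separates all others. Once the objectwise-pushout description $(rX)_1 = {*}\amalg_{X_0} X_1$ is in place this is routine, and everything downstream — the transport along $\tau(q)$ and the two invocations of \cref{embeddability theorem} — is bookkeeping. Note that \cref{thm injective} is used only through its packaging in \cref{embeddability theorem}, so it need not be invoked directly.
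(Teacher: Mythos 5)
Your proposal is correct and follows essentially the same route as the paper: both arguments come down to the single observation that the reduction map $q\colon X \to \mathcal{R}X$ collapses only the identity edges (which are already determined by their common source) and is injective on the remaining $1$-simplices, so a witness to non-embeddability survives reduction. The paper phrases the transport of that witness via mean words while you phrase it via functoriality of $\tau$ and the unit map, but by \cref{embeddability theorem} these are the same argument; your explicit pushout computation of $(\mathcal{R}X)_1$ just makes precise what the paper leaves implicit.
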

\begin{proof}
Given a word $w\in \words^+(X)_n$, if $f \leftsquigarrow w \rightsquigarrow g$ then the source of $f$ and $g$ are equal.
In particular, if they are both identities, then $f=g$.
If $X$ is not embeddable, then there exists a mean word $w$.
By the preceding sentence, the reduction of $w$ is still mean.
Thus the reduction $\mathcal{R}X$ of $X$ is not embeddable.
\end{proof}

Suppose $C$ is a category and define a monoid $M = M(C)$ whose elements are strings $(f_1, \dots, f_n)$ for $n\geq 0$ where 
\begin{enumerate}
\item $f_i \colon a_{i-1} \to b_i$ is a nonidentity morphism of $C$, and \label{item reduced}
\item $a_i \neq b_i$ for each $i$. \label{item jagged}
\end{enumerate}
As usual, a string satisfying \eqref{item reduced} is \emph{reduced}, and we call a string satisfying condition \eqref{item jagged} \emph{jagged}.

If we have an arbitrary word which is not reduced, we may delete any identities to make it reduced.
Likewise, given a non-jagged word there is a serration process which uses composition to arrive at a jagged word. 
Of course the reduction of a jagged word may not be jagged, and the serration of a reduced word may not be reduced.
Letting $\rs$ be this iterated reduction-serration procedure, the multiplication of two elements $x = (f_1, \dots, f_n)$ and $y = (g_1, \dots, g_m)$ is defined to be \[ x\bullet y \coloneqq
\rs(yx) = \rs(g_1, \dots, g_m, f_1, \dots, f_n).\]

For convenience, we rephrase the definition of this monoid in terms of rewrite systems and confluence \cite[\S2.1--2.2]{Huet:CR}.
Set $S$ be the set of words in $C_1$, and consider the following rewrite rule $\rightarrow$ on $S$: 
\begin{align*}
	(f_1, \dots, f_n) &\rightarrow (f_1, \dots, f_{i+1} \circ f_i, \dots, f_n) \\
	(f_1, \dots, f_{i-1}, \id_a, f_i, \dots, f_n) &\rightarrow (f_1, \dots, f_{i-1}, f_i, \dots, f_n)
\end{align*}
with the first occurring whenever $f_i$ and $f_{i+1}$ are composable.
As usual the reflexive-transitive closure of $\rightarrow$ is denoted by $\overset{*}\rightarrow$.
The elements of $M$ are precisely the minimal elements of $S$.

\begin{lemma}
Each element of $(S,\rightarrow)$ has a unique normal form.
\end{lemma}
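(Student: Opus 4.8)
The plan is to invoke Newman's Lemma (see \cite[\S2.1--2.2]{Huet:CR}): a terminating rewrite system that is locally confluent is confluent, and a confluent terminating system assigns to each element a unique normal form. Termination is immediate, since both rules strictly decrease the length of a word by one; hence there is no infinite chain $s_0 \rightarrow s_1 \rightarrow \cdots$, and every word reaches at least one normal form. It therefore remains to establish \emph{local confluence}: whenever $s \rightarrow t$ and $s \rightarrow t'$, there is a common $u$ with $t \overset{*}{\rightarrow} u$ and $t' \overset{*}{\rightarrow} u$.

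Next I would reduce local confluence to a finite critical-pair analysis. Call the first rule (merging a composable adjacent pair) the \emph{composition} redex, occupying two adjacent slots, and the second rule (removing an identity) the \emph{deletion} redex, occupying one slot. Two redexes applied to $s$ either occupy disjoint slots, in which case they plainly commute and reconverge after applying the other rule, or they overlap. Given the widths just noted, the only overlaps are: (A) two composition redexes at positions $i$ and $i+1$, sharing the entry $f_{i+1}$; and (B) a composition redex on $(f_i, f_{i+1})$ together with a deletion of $f_i$ or $f_{i+1}$ in the case that the deleted entry happens to be an identity.

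I would then resolve each overlap using the category axioms. Overlap (A) is exactly associativity: applied to $(\dots, f_i, f_{i+1}, f_{i+2}, \dots)$, one step yields $(\dots, f_{i+1}\circ f_i, f_{i+2}, \dots)$ and the other yields $(\dots, f_i, f_{i+2}\circ f_{i+1}, \dots)$, and in both the remaining pair is again composable and rewrites to the common word $(\dots, (f_{i+2}\circ f_{i+1})\circ f_i, \dots)$, well defined since composition in $C$ is associative. Overlap (B) is the unit law: if $f_i = \id_a$ then composability forces $f_{i+1}\colon a \to b$, and composing produces $f_{i+1}\circ \id_a = f_{i+1}$, precisely the word obtained by deleting $\id_a$; the case $f_{i+1} = \id_a$ is symmetric, using $\id_a \circ f_i = f_i$. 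Disjoint deletions, and a deletion disjoint from a composition, commute on the nose.

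The main obstacle is not difficulty but bookkeeping: confirming that this list of overlaps is exhaustive, i.e.\ that no further critical pair is concealed once slots are renumbered after a deletion. Since each left-hand pattern has width at most two and the surrounding context is left untouched by either rule, every two-step divergence arises from one of the configurations above, so the enumeration is complete and local confluence holds. Newman's Lemma then gives confluence, and confluence together with termination yields a unique normal form for each element of $(S, \rightarrow)$; these normal forms are exactly the reduced jagged words, i.e.\ the elements of $M$.
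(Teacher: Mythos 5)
Your proposal is correct and follows essentially the same route as the paper: termination from the length grading, local confluence via a case analysis of disjoint versus overlapping redexes (resolving the composition--composition overlap by associativity and the composition--deletion overlap by the unit law, with disjoint redexes commuting), and then Newman's lemma. The paper merely packages the same case analysis as an explicit three-way enumeration with formulas for the common reduct $z$, observing that the overlapping composition--deletion case gives $x=y$ outright.
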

\begin{proof}
The rewriting system $(S,\rightarrow)$ is terminating, since $S$ is graded by length and $\rightarrow$ is strictly decreasing on length.
We show below that $(S,\rightarrow)$ is locally confluent, i.e.\ $x \leftarrow w \rightarrow y$ implies there exists $w'$ such that $x \overset{*}\rightarrow w' \overset{*}\leftarrow y$.
Newman's lemma says that a locally confluent, terminating rewrite system is confluent, i.e.\ $x \overset{*}\leftarrow w \overset{*}\rightarrow y$ implies there exists $w'$ such that $x \overset{*}\rightarrow w' \overset{*}\leftarrow y$.
In particular, each element of $S$ has a unique normal form.

To prove that $(S,\rightarrow)$ is locally confluent, it suffices to show given $x \leftarrow w \rightarrow y$ that $x=y$ or there exists $z$ with $x \rightarrow z \leftarrow y$.
Write $w = (f_1, \dots, f_n)$ and assume $x\neq y$.
As there are two types of $\rightarrow$, we have three possibilities to consider.
First, suppose $x = (f_1, \dots, \hat f_i, \dots, f_n)$ and $y = (f_1, \dots, \hat f_j, \dots, f_n)$ with $i < j$ and $f_i, f_j$ are identities.
Then \[ x \rightarrow (f_1, \dots, \hat f_i, \dots, \hat f_j, \dots, f_n) \leftarrow y.\]

Next suppose $x = (f_1, \dots, \hat f_i, \dots, f_n)$ and $y = (f_1, \dots, f_{j+1} \circ f_j, \dots, f_n)$ where $f_i$ is an identity.
We cannot have $i=j,j+1$, or else we'd have $x =y$.
Setting 
\[
	z = \begin{cases}
		(f_1, \dots, \hat f_i, \dots, f_{j+1} \circ f_j, \dots, f_n) & i < j \\
		(f_1, \dots,  f_{j+1} \circ f_j, \dots, \hat f_i, \dots, f_n) & i > j+1
	\end{cases}
\]
we have $x \rightarrow z \leftarrow y$.
Finally, suppose $i < j$ and $x = (f_1, \dots, f_{i+1} \circ f_i, \dots, f_n)$ and $y = (f_1, \dots, f_{j+1} \circ f_j, \dots, f_n)$.
Setting
\[
	z = \begin{cases}
		(f_1, \dots, f_{i+1} \circ f_i, \dots, f_{j+1} \circ f_j, \dots, f_n) & i \leq j-2 \\
		(f_1, \dots, f_{i+2} \circ f_{i+1} \circ f_i, \dots, f_n) & i = j-1
	\end{cases}
\]
we have $x \rightarrow z \leftarrow y$.
\end{proof}

\begin{lemma}
Multiplication in $M$ is associative.
\end{lemma}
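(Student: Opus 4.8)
The plan is to reduce associativity to a single \emph{interchange identity} for the normal-form operator $\rs$, namely
\[ \rs(uv) = \rs\bigl(\rs(u)\,v\bigr) = \rs\bigl(u\,\rs(v)\bigr) \]
for all words $u,v \in S$ (here juxtaposition denotes concatenation). Granting this, associativity is immediate: unwinding $x\bullet y = \rs(yx)$ gives
\[ (x\bullet y)\bullet z = \rs\bigl(z\,\rs(yx)\bigr) = \rs(zyx), \qquad x\bullet(y\bullet z) = \rs\bigl(\rs(zy)\,x\bigr) = \rs(zyx), \]
where each interior equality is an instance of the interchange identity; both sides equal $\rs(zyx)$.

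To prove the interchange identity, I would first record that $\rightarrow$ is \emph{compatible with concatenation}: if $u \overset{*}\rightarrow u'$ then $uv \overset{*}\rightarrow u'v$ and $vu \overset{*}\rightarrow vu'$ for every word $v$. By induction on the length of a reduction sequence it suffices to treat a single step $u\rightarrow u'$. Such a step either composes an adjacent composable pair or deletes an identity letter at some internal position of $u$; appending $v$ on either side leaves the letters and the composability at those positions unchanged, so the identical step applies to $uv$ and to $vu$.

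With compatibility in hand, the interchange identity follows from the previous lemma. Since $u \overset{*}\rightarrow \rs(u)$, compatibility yields $uv \overset{*}\rightarrow \rs(u)\,v \overset{*}\rightarrow \rs\bigl(\rs(u)\,v\bigr)$, whose target is a normal form; uniqueness of normal forms forces it to equal $\rs(uv)$. The identity $\rs(uv) = \rs\bigl(u\,\rs(v)\bigr)$ is proved symmetrically.

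The only real point requiring care is the compatibility step, and there it is purely a matter of observing that every rewrite rule is local: a reduction performed inside a subword never interacts with material appended at either end, so it persists in the larger word. Beyond this, the one thing to track is the order reversal built into $x\bullet y = \rs(yx)$, which is exactly what makes both associativity computations collapse to the common value $\rs(zyx)$.
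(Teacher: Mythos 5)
Your proof is correct and follows essentially the same route as the paper: both arguments reduce associativity to the uniqueness of normal forms established in the preceding lemma, by exhibiting $\rs(\rs(zy)\,x)$ and $\rs(z\,\rs(yx))$ as normal forms of the same word $zyx$. The only difference is that you spell out the compatibility of $\rightarrow$ with concatenation, which the paper uses implicitly when writing $xyz \overset{*}\rightarrow \rs(xy)z$.
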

\begin{proof}

Given elements $x,y,z \in M$, we have
\[
\begin{gathered}
	xyz \overset{*}\rightarrow \rs(xy)z \overset{*}\rightarrow \rs(\rs(xy)z)
	\\
	xyz \overset{*}\rightarrow x\rs(yz) \overset{*}\rightarrow \rs(x\rs(yz)) 
\end{gathered}
\]
and so the terms on the right are normal forms of $xyz$.
By the previous lemma, \[ z \bullet (y \bullet x)  = \rs(\rs(xy)z) = \rs(x\rs(yz))  = (z \bullet y)\bullet x. \qedhere\]
\end{proof}

\begin{proposition}
There is a functor $C \to M$ sending a nonidentity element $f\colon a\to b$ to $(f)$ and $\id_a \colon a \to a$ to $()$.
\end{proposition}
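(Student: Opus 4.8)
The plan is to check directly that the stated assignment $F$ is a functor, regarding the monoid $M$ as a one-object category whose composition law is the operation $\bullet$. Preservation of identities is immediate: $F(\id_a) = ()$ is the empty string, and this is the unit of $M$, because $\rs$ fixes every string already in normal form and the elements of $M$ are by construction precisely the normal forms.

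The substance of the proof is to show, for each composable pair $f\colon a\to b$ and $g\colon b\to c$ in $C$, that $F(g\circ f) = F(g)\bullet F(f)$. Unwinding the definition $x\bullet y = \rs(yx)$, the right-hand side equals $\rs(w)$, where $w$ is the string obtained by concatenating the entries of $F(f)$ followed by those of $F(g)$. First I would dispatch the degenerate cases. If $f = \id_a$, then $F(f) = ()$, so $w = F(g)$ is already a normal form, $\rs$ fixes it, and since $g\circ f = g$ both sides equal $F(g)$; the case $g = \id_b$ is symmetric.

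In the remaining case both $f$ and $g$ are nonidentities, so $w = (f,g)$, and these two entries are composable in $C$ since the target of $f$ is $b$, which is also the source of $g$. A single serration step then rewrites $(f,g)\to(g\circ f)$. If $g\circ f$ is a nonidentity it is already in normal form and equals $F(g\circ f)$; if instead $g\circ f = \id_a$, one further reduction deletes it to yield $() = F(g\circ f)$. Thus $\rs(w) = F(g\circ f)$ in every case, establishing functoriality.

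I expect the only genuine care to be required at the two degeneracies—when one of $f,g$ is an identity, and when the composite $g\circ f$ is an identity—since these are exactly the points where the definition of $F$ on identities meets the reduction half of $\rs$. No confluence input beyond the already-established uniqueness of normal forms is needed, because each string $w$ arising here reaches its normal form in at most one serration step followed by at most one reduction step.
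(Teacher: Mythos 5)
Your proof is correct and follows essentially the same route as the paper, which defines the assignment uniformly as $f \mapsto \rs(f)$ and verifies functoriality via the single identity $(g)\bullet(f) = \rs(f,g) = \rs(g\circ f)$, relying on uniqueness of normal forms. Your version merely spells out the degenerate cases (an identity factor, or an identity composite) that the paper's one-line computation absorbs implicitly.
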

\begin{proof}
The functor may uniformly be described by $f \mapsto \rs(f)$.
Given a composable pair $f,g$ of morphisms in $C$, we have
\[
	(g) \bullet (f) = \rs(f,g) = \rs(g\circ f). \qedhere
\]
\end{proof}

\begin{lemma}\label{lem RC embeds}
If $C$ is a category, then $\mathcal{R}C$ embeds in the monoid $M= M(C)$.
\end{lemma}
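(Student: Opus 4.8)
The plan is to build the embedding from the functor $C \to M$ constructed in the preceding proposition and then detect the monomorphism at the level of $1$-simplices. First I would regard the monoid $M$ as a one-object category and apply the nerve, so that the functor $C \to M$ induces a simplicial map $NC \to NM$. Since $M$ has a single object, $NM$ is reduced, so this map factors uniquely through the reduction to give $\iota\colon \mathcal{R}C \to NM$. By the definition of embeddability, the content of the lemma is precisely that $\iota$ is a monomorphism.

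Next I would reduce the problem to low dimensions. Both simplicial sets in sight are edgy: $NM$ is a nerve, and $\mathcal{R}C$ is edgy because $NC$ is and reduction preserves edginess. For edgy simplicial sets, a simplicial map that is injective on $0$- and $1$-simplices is automatically a monomorphism: each $\words(X)_n$ is an iterated fibre product of copies of $X_1$ over $X_0$, so injectivity on $X_0$ and $X_1$ forces $\words(X)_n \to \words(Y)_n$ to be injective, and then the commuting triangle with the injective Segal maps $\edgemap_n\colon X_n \hookrightarrow \words(X)_n$ forces $X_n \to Y_n$ to be injective. This is the simplicial counterpart of the $1$-simplex detection already used in the proof of \cref{embeddability theorem}.

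Finally I would verify injectivity of $\iota$ in degrees $0$ and $1$. On vertices both sides are a single point, so there is nothing to check. For $1$-simplices, the reduction only collapses the objects of $C$, so $(\mathcal{R}C)_1$ consists of the nonidentity morphisms of $C$ together with one adjoined basepoint $\ast$, the common image of all the identities. The map $\iota$ sends a nonidentity $f$ to $\rs(f) = (f)$ and sends $\ast$ to the empty word $()$. A single-letter word is reduced and vacuously jagged, hence is already a normal form, so $(f) = (f')$ in $M$ forces $f = f'$, and $(f) \neq ()$ always; thus $\iota$ is injective on $1$-simplices. Combined with the detection principle above, $\iota$ is a monomorphism.

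The main obstacle I anticipate is not the computation on $1$-simplices, which is immediate, but justifying that injectivity there suffices: confirming that $\mathcal{R}C$ is edgy and that reduction does not secretly identify two distinct morphisms of $C$ (it collapses only objects, adjoining a single basepoint in degree $1$). Once the detection principle is in hand, the embedding is forced by the fact that each morphism is its own length-one normal form.
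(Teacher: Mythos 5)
Your proposal is correct and follows essentially the same route as the paper: factor the functor $C \to M$ through the reduction, observe that the image of $C_1$ consists of the words of length at most one (which are their own normal forms), and conclude injectivity on $1$-simplices, with edginess/spininess reducing the monomorphism claim to that degree. The paper's proof is just a terser version of this, leaving the detection principle and the normal-form check implicit.
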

\begin{proof}
The functor $C \to M$ factors as $C \to \mathcal{R}C \to M$.
The image of $C_1$ in $M$ is precisely the set of words of length at most 1, and $(\mathcal{R}C)_1 \to M$ is injective. 
\end{proof}

Notice that if $C$ is a groupoid, then $M$ is a group: $(f_1, \dots, f_n)^{-1} = (f_n^{-1}, \dots, f_1^{-1})$.

\begin{theorem}
A partial groupoid $X$ is embeddable if and only if its reduction is so.
\end{theorem}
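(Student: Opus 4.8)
The forward implication---that embeddability of the reduction forces embeddability of $X$---is exactly \cref{lem red emb}, so the plan is to establish the converse: if $X$ is embeddable, then $\mathcal{R}X$ is embeddable. The central idea is to build a group directly out of the fundamental groupoid and factor the unit map through it.

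First I would pass to the fundamental groupoid. Since $X$ is embeddable, the unit $\eta_X \colon X \to N\tau X$ is a monomorphism by definition; write $G = \tau X$, a groupoid. Form the monoid $M = M(G)$; because $G$ is a groupoid, $M$ is a group, as observed after \cref{lem RC embeds}. The functor $G \to M$ of the preceding proposition induces a map of nerves $NG \to NM$, and $NM$ is reduced since $M$ has a single object. Composing with the unit gives a map $X \to NG \to NM$ into a reduced symmetric set, which therefore factors through the reduction as $X \twoheadrightarrow \mathcal{R}X \xrightarrow{\phi} NM$. As $M$ is a group, it then suffices by \cref{embeddability theorem} to prove that $\phi$ is a monomorphism, for this exhibits $\mathcal{R}X$ as embedded in a groupoid.

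Since $\mathcal{R}X$ is spiny and $NM$ is edgy, I would detect the monomorphism on $1$-simplices: $\phi$ commutes with faces, so the spine of $\phi(x)$ is $\phi$ applied to the spine of $x$, and injectivity on $1$-simplices combined with the Segal injectivity of $\mathcal{R}X$ then forces equality in every degree (on the single vertex it is automatic). To check injectivity on $1$-simplices, note every $1$-simplex of $\mathcal{R}X$ is the image of some $\tilde f \in X_1$ under the surjective reduction map, and $\phi$ sends it to the image of the morphism $[\tilde f]\in G$ in $M$, namely $([\tilde f])$ when $[\tilde f]$ is not an identity and the empty word otherwise. If two $1$-simplices have equal image, then either both $[\tilde f],[\tilde g]$ are non-identities with $[\tilde f]=[\tilde g]$ in $G$, or both are identities. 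In the first case injectivity of $\eta_X$ on $1$-simplices gives $\tilde f=\tilde g$; in the second, injectivity of $\eta_X$ shows each $\tilde f$ is the degenerate simplex on its source vertex, so both descend to $\mathrm{id}_\ast$ in $\mathcal{R}X$. Either way the two $1$-simplices agree.

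The one point needing care---and the reason for routing through $M$ rather than simply reducing the inclusion $X \hookrightarrow N\tau X$---is that the reduction functor is a left adjoint and need not preserve monomorphisms, since collapsing all vertices could a priori collide distinct edges. Working with the group $M$, whose nerve is already reduced, sidesteps this obstacle: the only identifications $\phi$ can introduce come from morphisms of $G$ that are identities, and embeddability of $X$ guarantees such morphisms lift only to degenerate $1$-simplices, which the reduction identifies in any case. Granting the injectivity above, $\phi$ is a monomorphism, so $\mathcal{R}X$ embeds in the group $M$ and is embeddable by \cref{embeddability theorem}; together with \cref{lem red emb} this proves the theorem.
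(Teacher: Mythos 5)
Your proof is correct and follows essentially the same route as the paper: both arguments embed $\mathcal{R}X$ into the group $M(\tau X)$, and both quote \cref{lem red emb} for the converse direction. The one substantive point to correct is your stated reason for checking injectivity of $\phi\colon \mathcal{R}X \to NM$ by hand: contrary to your remark, the reduction functor \emph{does} preserve monomorphisms --- a monomorphism of symmetric sets sends non-totally-degenerate simplices to non-totally-degenerate simplices, so the degreewise description $(\mathcal{R}X)_n = (X_n \setminus (\sk_0 X)_n) \amalg \{\ast\}$ shows no new collisions occur --- and the paper uses exactly this fact to conclude $\mathcal{R}X \rightarrowtail \mathcal{R}N\tau X \rightarrowtail NM(\tau X)$ directly from \cref{lem RC embeds}. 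Your explicit verification on $1$-simplices (splitting into the cases where $[\tilde f]$ is or is not an identity of $\tau X$) is a valid, if slightly longer, substitute for that observation.
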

\begin{proof}
Suppose $X$ is embeddable.
By factoring the composite $X \to N\tau X \to \mathcal{R} N\tau X$ through $\mathcal{R} X$ and applying \cref{lem RC embeds} to the category $\tau X$, we have the following diagram
\[ \begin{tikzcd}
X \rar[hook] \dar & N\tau X \dar  \\
\mathcal{R} X \rar & \mathcal{R} N\tau X \rar[hook] & M(\tau X)
\end{tikzcd} \]
The reduction of an injective map is injective, so $\mathcal{R}X$ embeds in the group $M(\tau X)$.
The converse is \cref{lem red emb}.
\end{proof}

\subsection*{Acknowledgments} 
We thank Andy Chermak and Rémi Molinier for discussions related to this paper.

\bibliographystyle{amsplain}
\bibliography{embed}

\end{document}